\titleformat{\section}{\normalfont\large\bfseries}{\thesection.}{0.5em}{}
\titleformat{\subsection}{\normalfont\normalsize\bfseries}{\thesubsection.}{0.5em}{}
\numberwithin{equation}{section}
\numberwithin{figure}{section}
\newtheorem{theorem}{Theorem}[section]
\newtheorem{proposition}[theorem]{Proposition}
\newtheorem{lemma}[theorem]{Lemma}
\newtheorem{remark}[theorem]{Remark}
\newtheorem{definition}[theorem]{Definition}
\newcommand{\bnull}{\boldsymbol 0}
\renewcommand{\bf}{\boldsymbol f}
\newcommand{\bg}{\boldsymbol g}
\newcommand{\bn}{\boldsymbol n}
\newcommand{\bp}{\boldsymbol p}
\newcommand{\bpa}{\bp_{\balpha}}
\newcommand{\bq}{\boldsymbol q}
\newcommand{\bqa}{\bq_{\balpha}}
\newcommand{\bs}{\boldsymbol s}
\newcommand{\bu}{\boldsymbol u}
\newcommand{\bua}{\bu_{\balpha}}
\newcommand{\hbuae}{\widehat\bu_{\balpha_\epsilon}}
\newcommand{\dbua}{\dot{\bu}_{\balpha}}
\newcommand{\buae}{\boldsymbol{u}_{\balpha_\epsilon}}
\newcommand{\bv}{\boldsymbol v}
\newcommand{\bx}{\boldsymbol x}
\newcommand{\bH}{\boldsymbol H}
\newcommand{\bO}{\boldsymbol O}
\newcommand{\balpha}{\boldsymbol\alpha}
\newcommand{\bTheta}{\boldsymbol\Theta}
\newcommand{\btheta}{\boldsymbol\theta}
\newcommand{\bphi}{\boldsymbol\phi}
\newcommand{\bomega}{\boldsymbol\omega}
\newcommand{\bnabla}{\boldsymbol\nabla}
\newcommand{\mC}{\mathbb C}
\newcommand{\mN}{\mathbb N}
\newcommand{\mR}{\mathbb R}
\newcommand{\Oa}{\Omega_{\balpha}}
\newcommand{\Oae}{\Omega_{\balpha_\epsilon}}
\newcommand{\Oan}{\Omega_{\balpha_n}}
\newcommand{\Oas}{\Omega_{\balpha^*}}
\newcommand{\oa}{{\bomega}_{\balpha}}
\newcommand{\GD}{\Gamma_{\mathcal D}}
\newcommand{\GN}{\Gamma_{\mathcal N}}
\DeclareMathOperator{\dx}{\operatorname{d}\!\bx}
\DeclareMathOperator{\ds}{\operatorname{d}\!\bs}
\DeclareMathOperator{\C}{\mathcal{C}}
\DeclareMathOperator{\F}{\mathcal{F}}
\DeclareMathOperator{\A}{\mathcal{A}}
\DeclareMathOperator{\M}{\mathcal{M}}
\DeclareMathOperator{\Id}{\operatorname{\bold{Id}}}
\DeclareMathOperator{\bI}{\operatorname{\bold I}}
\DeclareMathOperator{\bJ}{\operatorname{\bold J}}
\DeclareMathOperator{\D}{\operatorname{\bold D}\!}
\DeclareMathOperator{\tr}{\operatorname{tr}}
\renewcommand{\d}{\operatorname{d}\!}
\renewcommand{\div}{\operatorname{div}}
\title{Optimization of the cut configuration for skin grafts}
\author[$\dagger$]{Helmut Harbrecht}
\author[$\dagger$]{Viacheslav Karnaev}
\affil[$\dagger$]{Departement Mathematik und Informatik, 
Universit\"at Basel, \newline 4051 Basel, Switzerland}
\date{\today}
\begin{document}

\maketitle

\begin{abstract}
The subject of this work is the problem of optimizing the 
configuration of cuts for skin grafting in order to improve 
the efficiency of the procedure. We consider the optimization 
problem in the framework of a linear elasticity model. We 
choose three mechanical measures that define optimality 
via related objective functionals: the compliance, the 
\(L^p\)-norm of the von Mises stress, and the area of 
the stretched skin. We provide a proof of the existence of 
solutions for each problem, but we cannot claim uniqueness. 
We compute the gradient of the objectives with respect to 
the cut configuration using shape calculus concepts. To 
solve the problem numerically, we use the gradient descent 
method, which performs well under uniaxial stretching. 
However, in more complex cases, such as multidirectional 
stretching, its effectiveness is limited due to low sensitivity 
of the functionals. To avoid this difficulty, we use a combination 
of the genetic algorithm and the gradient descent method, 
which leads to a significant improvement in the results.
\end{abstract}
\renewcommand{\linenumberfont}{\tiny\color{gray}}

\section{Introduction}
Skin grafting is a surgical procedure. Healthy skin is 
removed from one part of the body and transplanted. 
The healthy skin covers or replaces damaged or lost 
skin, for example, on the lower leg, to get a wound 
healed. The most commonly used skin graft is the
\emph{split-thickness skin graft} (SSG) which is a 
thin layer of shaved skin, see \cite{primer1,primer2}
for example.
It is called \emph{split-thickness} because only the epidermis 
and part of the dermis is shaved off, leaving part of the skin 
behind. As a result, the part left behind, called the \emph{donor 
site}, can heal on its own without needing any additional skin 
covering. 

To transplant as small a piece of skin as possible or, vice 
versa, in order to cover as large a piece of skin as possible, 
numerous parallel rows of short cuts are made in the 
healthy, harvested skin by using a suitable skin graft mesh 
device. The question we like to address in this article is: \emph{Is 
there a better pattern to enlarge the healthy piece than just making 
parallel cuts?} We tackle this problem by means of shape optimization 
\cite{DEZ,henrot2018shape,PIR,sokolowski1992introduction},
which means we are searching for an optimal layout of the cuts.
To this end, we model the piece of skin by the equations of
linear elasticity in two spatial dimensions and try to optimize
the orientation of specific, predefined cuts such that a cost 
functional is optimized. We consider different cost functionals 
when stretching the skin: We (a) minimize the von Mises stress 
which ensures a small loading of the piece of skin, (b) maximize 
the area of the skin after the stretching procedure, and (c) minimize 
the compliance of the piece of skin. We analytically compute the 
Hadamard shape gradient for either cost functional which enables 
us to perform a gradient based optimization algorithm. 

Note that the linear-elastic model is the simplest constitutive 
model which is used to represent the mechanical behaviour of 
skin and its layers, see e.g.~\cite{joodaki2018skin} for an overview 
of different skin models. Human skin is indeed not a simple object 
from the modelling point of view and its properties would be better 
described by a nonlinear hyperelasticity model. However, we have 
chosen this simple model in order to be able to compute analytical 
expressions of the shape gradients of the cost functions under 
consideration. Moreover, the numerical simulation of the equations 
of linear elasticity can easily be carried out using the finite element 
method, compare \cite{braess,brenner} for example. We use in our
computations the finite element solver FreeFem++, see 
\cite{hecht2012new}.

Our particular setup is as follows. A quadratic piece of skin is 
subdivided into regular grid cells, each of which contains a 
cut that is anchored in the center of the cell. The design variable
is the rotation angle of the cut. However, it turns out that the 
shape functionals are not very sensitive with respect to the
design variables. Moreover, we observe numerous local minima.
We have therefore decided to group the cells into blocks that 
all match, so that the skin is represented by identical copies 
of one of these blocks. Moreover, we also combined the 
gradient-based optimization with a genetic algorithm in order
to avoid to get stuck in a local minimum. Both ideas improve 
the optimization results considerably, as can be seen later 
in Section~\ref{sct:numerix}.

We finally like to mention related literature. An extensive study of 
different patterns in skin grafting is found in \cite{gupta2022biomechanical}.
Likewise, also in \cite{graft1,graft2,graft3} one can find results of 
forward simulations for predefined patterns. In contrast to these 
articles, however, we try to find the optimal pattern for skin grafting 
in case of the specific situation under consideration by an optimization
algorithm. Indeed, we are only aware of \cite{phasefield}, where also 
shape optimization has been expoited by means of a phasefield 
model in combination with a first discretize-then-optimize approach.
Nonetheless, the results therein are completely different from 
our findings as we only vary the orientation of predefined cuts.

The rest of this article is organized as follows. In 
Section~\ref{sct:modelling}, we formulate the problem under
consideration and cast it into shape optimization problems
given by three different shape functionals. We then derive 
in Section~\ref{sct:sensitivity} the related shape gradients 
and show the existence of solutions to the given shape 
optimization problems. The numerical method is introduced
in Section~\ref{sct:numerix}. Results of the numerical
optimization process are given which demonstrate the
feasibility of our approach. Finally, concluding remarks 
are stated in Section~\ref{sct:conclusio}.

\section{Problem formulation}\label{sct:modelling}
We first introduce some general notation and define the governing 
equations describing the stretched elastic body with \(N\) cuts modelled 
as a holes with given shape, size and position of the centers. Each 
hole is uniquely determined by a deviation angle. Thus, the 
configuration of the cuts is defined by \(N\) angles. We proceed 
with the formulation of the shape optimization problems for the 
unknown as the optimal configuration of the cuts. We finish this 
section by proving the existence of solutions to the optimization 
problem under consideration.

\subsection{Notation}
Let \(\Omega\subset \mR^2\) be a bounded and connected 
domain with Lipschitz-smooth external boundary \(\Gamma\) 
that is divided into two subsets \(\GD\) and \(\GN\) satisfying
\[
	|\GD|, |\GN| > 0\quad\text{such that}\quad	\Gamma = \GD\cup\GN.
\]
We further assume that there are \(N\) holes with Lipschitz-smooth 
boundaries \(\omega_{\alpha_i}, \ i=1,\dots,N\), inside the domain.
They are of the same shape and size, positioned on an equispaced 
grid, but with different angles of deviation from the abscissa axis. 
The set of all holes is denoted as 
\[
  \oa := \bigcup_{i=1}^N \omega_{\alpha_i}, 
  \quad\text{where}\quad \balpha=(\alpha_1,\dots,\alpha_N)\in[0,2\pi)^N.
\]

We shall consider an elastic body with $N$ cuts, represented 
by the domain \(\Oa:=\Omega\setminus\oa\). 
The state of the body is determined by the vector field of 
displacements \(\bu \colon \Omega\to\mR^2\). The mechanical 
properties of the body are completely characterized by the constant, 
symmetric fourth-order stiffness tensor \(\mC\), which incorporates
the material parameters: the Young modulus \(E > 0\) and the 
Poisson ratio \(-1 < \nu < 1/2\).

Throughout this article, we use the notation 
\[
	\varepsilon(\bu):=\frac{1}{2}\left(\bnabla \bu+\bnabla\bu^\top\right), 
	\quad\text{where}\quad  [\bnabla \bu]_{i,j} := \frac{\partial u_i}{\partial x_j}, \ i,j=1,2,
\]
for the deformation tensor and 
\[
	\sigma(\bu) := \mC:\varepsilon(\bu) = 2\mu\varepsilon(\bu) 
		+  \lambda\div(\bu){\bI}
\]
for the stress tensor, where \({\bI}\) is a \(2\times2\) identity 
matrix and the Lam\'e constants are
\[
	\mu=\frac{E}{2(1+\nu)} \quad\text{and} \quad
	\lambda=\frac{\nu E}{(1+\nu)(1-2\nu)}.
\]

\subsection{Governing equations}
To simplify the model, we consider the cuts as thin, elliptical 
holes $\omega_{\alpha_i}$, $i=1,\dots,N$. The main mechanical 
quantity is the displacement field \(\bu \in H^1(\Oa)^2\). It is described 
by the equations of linear elasticity, supplemented by boundary conditions. 
The elastic body is subject to the body force \(\bf \in L^2(\Oa)^2\) in the 
whole domain \(\Oa\) and the external displacements \(\bg \in L^2(\GD)^2\) 
on the part \(\GD\) of its external boundary, while the rest part \(\GN\) and 
holes boundaries \(\bomega_{\alpha}\) are unconstrained. Therefore, we
arrive at the boundary value problem
\begin{equation}\label{eq:elasticity_system}
	\left\{\;
	\begin{aligned}
		-\div\big(\sigma(\bu)\big)&= \bf \quad\text{in}\quad \Oa, \\[1ex]
		\sigma(\bu) \bn &= \bnull \quad\text{on}\quad \GN\cup\partial \bomega_{\balpha}, \\[1ex]
		\bu &= \bg \quad\text{on}\quad \GD.
	\end{aligned}
	\right.
\end{equation}
Here, \(\bn\) denotes the outward pointing unit normal 
vector on \(\partial\Oa\). A visualization of the model can be 
found in Figure \ref{fig:model}.

\begin{figure}[hbt]
	\centering
	\includegraphics[width=0.5\linewidth]{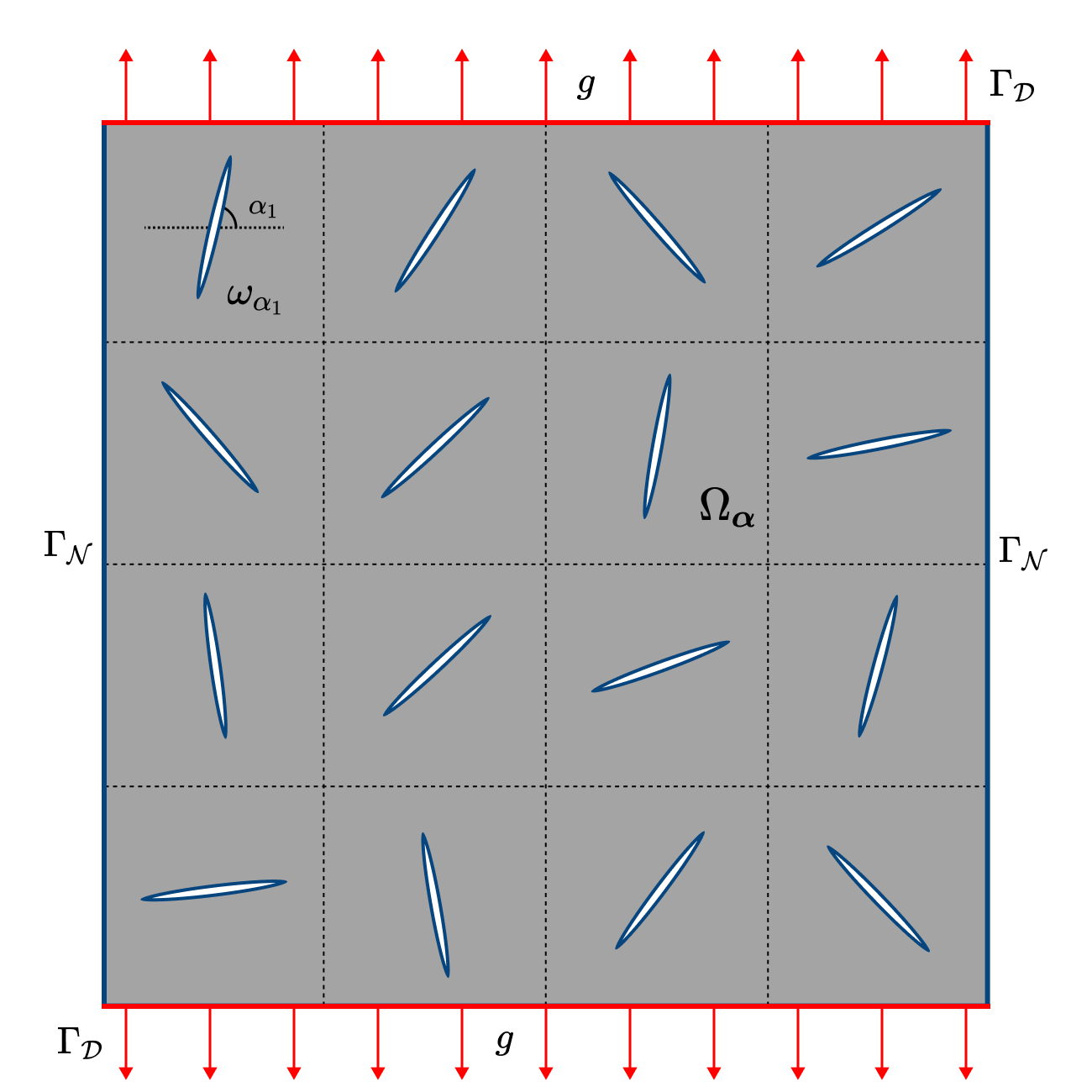}
	\caption{The model for the elastic body with cuts 
		which are represented by thin elliptical holes.}
	\label{fig:model}
\end{figure}

We shall define the space of an admissible solutions as 
\[
	\bH(\Oa) := \left\{\bu\in H^1(\Oa)^2 \ | \ \bu = \bg \ \text{on} \ \GD \right\}.
\]
Moreover, the respective test space is given by 
\[
	\bH_0(\Oa) := \left\{\bu\in H^1(\Oa)^2 \ | \ \bu = \bnull\ \text{on} \ \GD \right\}.
\]
The scalar product and norm in \(\bH(\Oa)\) defined as 
\begin{equation}\label{eq:scalar_product_norm_derfinition}
	\langle\bu,\bv\rangle_{\bH(\Oa)}
	=\int_{\Oa}\sigma(\bu): \varepsilon(\bv)\dx 
	\quad\text{and}\quad \|\bu\|_{\bH(\Oa)}
	= \int_{\Oa}\sigma(\bu): \varepsilon(\bu)\dx.
\end{equation}
In the same way it is defined in \(\bH_0(\Oa)\).
Thus, the variational formulation associated to the system 
\eqref{eq:elasticity_system} reads as follows: find \(\bu\in 
\bH(\Oa)\) such that 
\begin{equation}\label{eq:elasticity_variational_identity}
   \int_{\Oa}\sigma(\bu): \varepsilon(\bv)\dx 
   = \int_{\Oa} \bf\cdot\bv\dx \quad \forall\,\bv\in\bH_0(\Oa).
\end{equation}
It is well known that \eqref{eq:elasticity_variational_identity} 
admits a unique solution \(\bu\in\bH(\Oa)\). 

\subsection{Optimization problem}
We are looking for the optimal configuration \(\Oa\) of the model 
described above. By optimal, we mean such that the objective 
functional reaches a minimum or maximum in the set of admissible 
configurations. We consider two different cost functionals 
for the minimization problem: the compliance and the \(L^p\)-norm 
of the von Mises stress, and one for the maximization problem: 
the area of the deformed body. Here and in the following, we 
mean by \(\bua\) the solution of \eqref{eq:elasticity_system} 
for the body \(\Oa\), which determined by \(\balpha\in\mR^N\).
\begin{itemize}
	\item
	The compliance is given by 
	\[
		\C(\Oa) := \int_{\Oa}\sigma(\bua):\varepsilon(\bua)\dx.
	\]
	\item
	The von Mises stress in \(\mR^2\) is defined as 
	\[ 
		\sigma_{\text{VM}}(\bu) := \sqrt{\sigma_d(\bu):\sigma_d(\bu)},
	\]
	where \(\sigma_d\) is the stress deviator tensor 
	\[
		\sigma_d(\bu) := \sigma(\bu) - \frac{\operatorname{tr}(\sigma(\bu))}{2}\bI  
		= 2\mu\varepsilon(\bu) - \div(\bu)\bI.
	\]
	Of central interest from an applied point of view is the 
	\(L^\infty\)-norm of the von Mises stress. However, it is 
	not differentiable. Since we want to use gradient-based 
	methods, we consider the \(L^p\)-norm, which is known 
	to converge to the \(L^\infty\)-norm at \(p\to\infty\). So we 
	define the last objective function as 
	\[
		\M(\Oa) := \left(\int_{\Oa} 
		|\sigma_{\text{VM}}(\bua)|^p\dx\right)^{1/p} =  \left(\int_{\Oa} 
		(\sigma_d(\bua):\sigma_d(\bua))^{p/2}\dx\right)^{1/p}.
	\]
	\item
	The deformed body is defined as \((\Oa)_{\bua}:=(\Id + \bua)(\Oa)\), 
	where \(\Id: \mR^2\to \mR^2\) is the identity mapping. Thus, its 
	area defined as
	\[
		\A(\Oa) := \int_{(\Oa)_{\bua}}\dx = 
		\int_{\Oa}\det(\bI+\bnabla\bua)\dx .
	\]
\end{itemize}

We define the set of admissible configurations as
\[
	\bO_{2\pi} := \{\Oa\subset\mR^2 \ | \ \balpha \in (\mR \ \text{mod} \ 2\pi)^N\}.
\]
Note that the set \(\bO_{2\pi}\) is homeomorphic 
to \((\mR \ \text{mod} \ 2\pi)^N\) which is compact 
in the standard Euclidean metric \(d_{\mR^N}(\cdot,\cdot)\). 
Thus, we conclude compactness of \(\bO_{2\pi}\) in the metric 
that is defined by
\[
  d(\Oa, \Oas) := d_{\mR^N}\big(A^{-1}(\Oa),A^{-1}(\Oas)\big)
	= d_{\mR^N}(\balpha,\balpha^*),
\] 
where \(A: (\mR \ \text{mod} \ 2\pi)^N \to \bO_{2\pi}\)  
is a homeomorphism. 

Finally, we formulate the three shape 
optimization problems for finding the optimal cuts in the body 
as follows: find \(\Omega_{\balpha_{\C}}, \Omega_{\balpha_{\M}}, 
\Omega_{\balpha_{\A}}\in \bO_{2\pi}\) such that 
\begin{align}
	\C(\Omega_{\balpha_{\C}})&=\underset{\balpha\in\bO_{2\pi}}{\operatorname{inf}}
	\quad\C(\Oa), \label{eq:minimization_compliance} \\ 
	\M(\Omega_{\balpha_{\M}})&=\underset{\balpha\in\bO_{2\pi}}{\operatorname{inf}} 
	\quad\M(\Oa), \label{eq:minimization_vonmises} \\
	\A( \Omega_{\balpha_{\A}})&=\underset{\balpha\in\bO_{2\pi}}{\operatorname{sup}} 
	\quad \A(\Oa). \label{eq:minimization_area}
\end{align}

Since we formulate optimization problems on a compact set, 
we only need to show that the above functionals are continuous
in order to prove the existence of a solution. However, there is 
a certain difficulty as the functionals depend not only directly 
on \(\balpha\), but also on the solution \(\bua\). The sequence
\(\{\Oan\}_{n\in\mN}\subset\bO_{2\pi}\) corresponds to the relative 
sequence of fields \(\{\bu_n\in\bH(\Oan)\}_{n\in\mN}\), each element 
of which is defined in its own space. Therefore, for convenience, 
we will map them to a single, fixed space as follows. 

Let us fix some arbitary \(\Oas\in\bO_{2\pi}\) and a sequence 
\(\{\Oan\}_{n\in\mN}\subset\bO_{2\pi}\).  Since the cuts 
\(\omega_{\alpha_{n,i}}\) do not intersect each other  for fixed \(n\), 
we can define the non-intersecting and closed sets \(B_{r_i} := 
\{\bx\in\mR^2: \|\bx\|\leq r_i\}\) and \(B_{R_i}\{\bx\in\mR^2: 
\|\bx\|\leq R_i\}\) such that \(\omega_{\alpha_{n,i}}\subset B_{r_i}
\subsetneq B_{R_i}\subset\Omega\) for any \(n\). Let \(\chi_i(\bx)\in 
C_0^\infty(\Omega)\) be such that 
\begin{equation}\label{eq:cut_characteristic_function}
	 \chi_i(\bx) := \begin{cases} 
	 	1, \quad \text{if}\quad \bx\in  B_{r_i}, \\ 
	 	0, \quad \text{if}\quad \bx\in \Omega\setminus B_{R_i},
	 \end{cases}
\end{equation}
for \(i=1,\dots,N\). Let further \(\{\bold O_{i,n}\}_{n\in\mN}\) 
be a sequence of orthogonal matrices defined as
\[
 \bold O_{n,i} = \begin{bmatrix} \cos(\alpha_i^*-\alpha_{n,i}) 
 	& -\sin(\alpha_i^*-\alpha_{n,i}) \\ \sin(\alpha_i^*-\alpha_{n,i}) 
 	& \phantom{-}\cos(\alpha_i^*-\alpha_{n,i})  \end{bmatrix} 
 \quad\text{for any}\ i=1,\dots,N\ \text{and}\ n\in \mathbb{N}.
\]
Thus, we define a sequence \(\{\bphi_n\}_{n\in\mathbb{N}}\)  
of diffeomorphisms in accordance with
 \begin{equation}\label{eq:backmap}
	\bphi_n(\bx) := \bx + \sum_{i=1}^{N}\chi_i(\bx)(\bold O_{i,n}\bx-\bx)  
	\quad\text{for any}\ n\in \mathbb{N}.
\end{equation}
These mappings do not touch the external boundary \(\Gamma\). 
In particular, there holds \(\bphi_n(\Oan) = \Oas\) and 
\(\bphi_n^{-1}(\Oas) = \Oan\). Moreover, in the neighborhood 
of an arbitrary cut \(\omega_{\alpha_i}\), this mapping is 
expressed as \(\bphi_n(\bx)=\bold O_{i,n}\bx\) and, in the 
neighborhood of the external boundary, as \(\bphi_n(\bx)=\bx\). 
In addition, the Jacobi matrix of \(\bphi_n\) is given by
\begin{equation}\label{eq:jacobi_backmap}
	\bnabla\bphi_n(\bx) := \bI + \bJ_n(\bx)
	\quad\text{with}\quad \bJ_n(\bx) = \sum_{i=1}^{N}\bnabla
	\chi_i(\bx)\otimes(\bold O_{i,n}\bx-\bx) +\chi_i(\bx)(\bold O_{i,n} - \bI).
 \end{equation}

Before formulating the existence theorem, we need the 
following lemma and its proof.

\begin{lemma}\label{le:strong_convergence}
	Assume \(\Oas\in\bO_{2\pi}\) and consider the sequence 
	\(\{\Oan\}_{n\in\mN}\subset\bO_{2\pi}\). Let \(\bu_n := 
	\bu_{\balpha_n}\) for all $n\in\mathbb{N}$ and \(\bu^* := \bu_{\balpha^*}\) 
	denote the corresponding solutions of \eqref{eq:elasticity_variational_identity}, 
	the mapped fields \(\widehat\bu_n\) defined via the mapping \eqref{eq:backmap}.  
	If \(\Oan\to\Oas\) as \(n\to \infty\), then we have \(\widehat\bu_n := 
	\bu_n\circ\bphi_n\to \bu^*\) strongly in \(\bH(\Oas)\).
\end{lemma}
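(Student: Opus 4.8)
\emph{Plan of proof.} The idea is to transport everything onto the fixed reference domain $\Oas$ and to exploit that $\Oan\to\Oas$ means $\balpha_n\to\balpha^*$, whence the orthogonal matrices $\bold O_{n,i}\to\bI$ and therefore, by \eqref{eq:backmap} and \eqref{eq:jacobi_backmap}, $\bphi_n\to\Id$ together with $\bnabla\bphi_n=\bI+\bJ_n\to\bI$ uniformly on $\Omega$. Performing the change of variables $\bx=\bphi_n(\by)$ in the variational identity \eqref{eq:elasticity_variational_identity} for $\bu_n$ on $\Oan$ turns it into a problem for the pulled-back field $\widehat{\bu}_n=\bu_n\circ\bphi_n$ on $\Oas$: one obtains a bilinear form $a_n(\cdot,\cdot)$ on $\bH(\Oas)$ with coefficients assembled from $(\bnabla\bphi_n)^{-1}$ and $\det\bnabla\bphi_n$, and a linear functional $\ell_n$ assembled from $\bf\circ\bphi_n$ and $\det\bnabla\bphi_n$, so that $a_n(\widehat{\bu}_n,\widehat{\bv})=\ell_n(\widehat{\bv})$ for all $\widehat{\bv}\in\bH_0(\Oas)$. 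Since $\bphi_n$ equals the identity in a neighbourhood of $\Gamma$, the trace of $\widehat{\bu}_n$ on $\GD$ still equals $\bg$, so indeed $\widehat{\bu}_n\in\bH(\Oas)$. Because $\bnabla\bphi_n\to\bI$ uniformly, the coefficients of $a_n$ converge uniformly to those of the fixed elasticity form $a$ on $\Oas$, and $\ell_n\to\ell$ in the dual of $\bH_0(\Oas)$, using continuity of $\bf\mapsto\bf\circ\bphi_n$ in $L^2$ as $\bphi_n\to\Id$.

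First I would establish a uniform a priori bound $\|\widehat{\bu}_n\|_{H^1(\Oas)}\le C$. To this end I would split $\widehat{\bu}_n=\bg_0+\widehat{\bw}_n$ with a fixed lift $\bg_0\in\bH(\Oas)$ of the Dirichlet data and $\widehat{\bw}_n\in\bH_0(\Oas)$, and test with $\widehat{\bw}_n$. The key point is that $a_n$ is uniformly coercive on $\bH_0(\Oas)$ for $n$ large: the positivity of $\mC$ together with Korn's inequality on $\Oas$ controls $\varepsilon(\widehat{\bw}_n)$, and the pulled-back strain $\varepsilon_{\bphi_n}(\widehat{\bw}_n)$ differs from $\varepsilon(\widehat{\bw}_n)$ only by a term of size $\|(\bnabla\bphi_n)^{-1}-\bI\|_\infty\,\|\bnabla\widehat{\bw}_n\|_{L^2}$, which is absorbed for $n$ large. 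The uniform bound then yields a subsequence with $\widehat{\bu}_{n_k}\rightharpoonup\bu$ weakly in $H^1(\Oas)^2$, and since the trace operator is weakly continuous, $\bu=\bg$ on $\GD$, i.e. $\bu\in\bH(\Oas)$. Passing to the limit in $a_{n_k}(\widehat{\bu}_{n_k},\widehat{\bv})=\ell_{n_k}(\widehat{\bv})$ for fixed $\widehat{\bv}\in\bH_0(\Oas)$ is justified because the coefficients of $a_{n_k}$ converge uniformly while $\widehat{\bu}_{n_k}\rightharpoonup\bu$ and $\ell_{n_k}\to\ell$; hence $\bu$ solves \eqref{eq:elasticity_variational_identity} on $\Oas$, so by uniqueness $\bu=\bu^*$, independently of the subsequence.

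The decisive step is upgrading weak to strong convergence in the energy inner product. Writing $\bw^*:=\bu^*-\bg_0$, I would test the transported identity with $\widehat{\bw}_n$ itself to obtain $a_n(\widehat{\bw}_n,\widehat{\bw}_n)=\ell_n(\widehat{\bw}_n)-a_n(\bg_0,\widehat{\bw}_n)$. As $n\to\infty$ the right-hand side tends to $\ell(\bw^*)-a(\bg_0,\bw^*)=a(\bw^*,\bw^*)$, by combining the dual convergences $\ell_n\to\ell$ and $a_n(\bg_0,\cdot)\to a(\bg_0,\cdot)$ with the weak convergence $\widehat{\bw}_n\rightharpoonup\bw^*$ and the limit identity. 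Since $a_n-a\to0$ uniformly and $\{\widehat{\bw}_n\}$ is bounded, also $a(\widehat{\bw}_n,\widehat{\bw}_n)-a_n(\widehat{\bw}_n,\widehat{\bw}_n)\to0$, so that $a(\widehat{\bw}_n,\widehat{\bw}_n)\to a(\bw^*,\bw^*)$. This is precisely convergence of the energy $\|\widehat{\bu}_n\|_{\bH(\Oas)}\to\|\bu^*\|_{\bH(\Oas)}$; combined with weak convergence in the Hilbert space $\bH(\Oas)$ it yields $\|\widehat{\bu}_n-\bu^*\|_{\bH(\Oas)}\to0$. Finally, since every subsequence admits a further subsequence converging strongly to the same limit $\bu^*$, the whole sequence converges, which is the claim.

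I expect the main obstacles to be, first, verifying uniform coercivity of the perturbed forms $a_n$, which rests on Korn's inequality on the fixed, merely Lipschitz domain $\Oas$ and on the uniform smallness of $\bnabla\bphi_n-\bI$, and second, the bookkeeping in the strong-convergence step, where uniformly convergent forms and functionals must be paired correctly with weakly convergent arguments. By contrast, the explicit change-of-variables computation producing $a_n$ and $\ell_n$, together with the estimate $\|\bJ_n\|_\infty\to0$ inferred from $\bold O_{n,i}\to\bI$, are routine.
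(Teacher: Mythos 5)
Your proposal follows essentially the same route as the paper: pull the variational identity back to the fixed domain $\Oas$ via $\bphi_n$, exploit the uniform convergence $\bnabla\bphi_n\to\bI$ (equivalently $\bold O_{n,i}\to\bI$, $\det\bJ_n\to0$) to pass to the limit in the transported equation, identify the weak limit as $\bu^*$ by uniqueness of the solution on $\Oas$, and then upgrade weak convergence to strong convergence through convergence of the energy in the Hilbert-space setting. If anything, your version is more careful than the paper's at two points it passes over silently: you establish the uniform a priori bound via uniform coercivity (Korn plus smallness of $\bnabla\bphi_n-\bI$), and you introduce the Dirichlet lift $\bg_0$ so that testing with the solution itself is legitimate, whereas the paper tests directly with $\widehat\bu_n\notin\bH_0(\Oas)$.
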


\begin{proof}
	We can express the identity \eqref{eq:elasticity_variational_identity} in the form:
	\begin{align*}
		\int_{\Oas}\mC:\big((\bI + \bJ_n)^{-1}\bnabla\widehat\bu_n\big):
		\big((\bI + \bJ_n)^{-1}\bnabla\widehat\bv\big)\det(\bI + \bJ_n)\dx \
		 = \int_{\Oas}(\bf\cdot\widehat\bv)\det(\bI + \bJ_n)\dx& \\
		  \quad \forall \bv\in \bH_0(\Oas).&
	\end{align*}
	Utilizing the Neumann series  and the properties of determinants in 
	\(\mR^2\), this can be rewritten as
	\begin{equation}\label{eq:lemmaproof_eq1}
		\begin{aligned}
		\int_{\Oas}\sigma(\widehat\bu_n): \varepsilon(\widehat\bv)\dx
		+ \mathcal{B}_n(\widehat\bu_n, \widehat\bv)
		= \int_{\Oas}\bf\cdot\widehat\bv\dx
		+ \int_{\Oas}(\bf\cdot\widehat\bv)\det(\bJ_n)\dx&\\
		\forall \bv\in \bH_0(\Oas).&
		\end{aligned}
	\end{equation}
	Herein, due to the Cauchy-Schwarz inequality and the boundedness of \(\mC\) and \(\bJ_n\),
	we have
	\[
		|\mathcal{B}_n(\bu, \bv)| \leq C \|\bold O_{i,n}
	-\bold I\|\|\bu\|_{\bH(\Oas)}\|\bv\|_{\bH(\Oas)} \quad \forall \ \bu,\bv\in \bH(\Oas),
	\]
	for a constant \(C = \text{const} > 0\).
	
	In view of \(\Oan\to\Oas\), by definition we get \(\bold O_{i,n} \to \bold I\) and 
	\(\det(\bJ_n)\to 0\) for any \(i=1,\dots,N\) as \(n\to\infty\). 
	Thus, we conclude that \(\mathcal{B}_n(\widehat\bu_n, \widehat\bv)\to 0\) 
	for any \(\widehat\bv\in \bH_0(\Oas)\) as \(n\to\infty\). 
	Hence, we get
	\begin{equation}\label{eq:lemmaproof_eq2}
		\int_{\Oas}\sigma(\widehat\bu_n): \varepsilon(\widehat\bv)\dx 
		\to  \int_{\Oas}\sigma(\widetilde\bu): \varepsilon(\widehat\bv)\dx 
		\quad \forall \ \widehat\bv\in \bH(\Oas)
	\end{equation}
	as \(n\to\infty\). Combining \eqref{eq:lemmaproof_eq1} and 
	\eqref{eq:lemmaproof_eq2}, we conclude that
	\[
		\int_{\Oas}\sigma(\widetilde\bu): \varepsilon(\widehat\bv)\dx = 
		\int_{\Oas}\bf\cdot\widehat\bv\dx \quad \forall\,\widehat\bv\in\bH_0(\Oas).
	\]
	Since the solution of \eqref{eq:elasticity_variational_identity} 
	is unique for each \(\Oa\), we arrive at \(\widetilde\bu=\bu^*\). 
	 By the definition of the scalar product \(\langle \cdot, \cdot \rangle_{\bH(\Oas)}\) 
	 in \eqref{eq:scalar_product_norm_derfinition}, and using \eqref{eq:lemmaproof_eq2}, 
	 we deduce the weak convergence \(\widehat\bu_n \rightharpoonup \bu^*\) 
	 in \(\bH(\Oas)\), i.e.,
	\begin{align*}
		\langle\widehat\bu_n,\widehat\bv\rangle_{\bH(\Oas)}
		=\int_{\Oas}\sigma(\widehat\bu_n): \varepsilon(\widehat\bv)\dx 
		\to  \int_{\Oas}\sigma(\widetilde\bu): \varepsilon(\widehat\bv)\dx 
	     =\langle\bu^*,\widehat\bv\rangle_{\bH(\Oas)}& \\
	     \quad \forall \bv\in \bH_0(\Oas).
	\end{align*}
	
	Taking the test function \(\widehat\bv = \widehat\bu_n\) in 
	\eqref{eq:lemmaproof_eq1} and exploiting the weak convergence, 
	we obtain
	\begin{equation}\label{eq:lemmaproof_eq3}
		\begin{aligned}
			\int_{\Oas}\sigma(\widehat\bu_n): \varepsilon(\widehat\bu_n)\dx
			&+ \mathcal{B}_n(\widehat\bu_n, \widehat\bu_n) \\
			&= \int_{\Oas}\bf \cdot \widehat\bu_n\dx
			+ \int_{\Oas}(\bf \cdot \widehat\bu_n)\det(\bJ_n)\dx
			\to \int_{\Oas}\bf \cdot \bu^*\dx.
		\end{aligned}
	\end{equation}
	On the other hand, we have
	\begin{equation}\label{eq:lemmaproof_eq4}
		\int_{\Oas}\bf \cdot \bu^*\dx = \int_{\Oas}\sigma(\bu^*): \varepsilon(\bu^*)\dx.
	\end{equation}
	Using the definition of the norm \(\|\cdot\|_{\bH(\Oas)}\) from \eqref{eq:scalar_product_norm_derfinition}, and combining 
	\eqref{eq:lemmaproof_eq3} with \eqref{eq:lemmaproof_eq4}, 
	we deduce the convergence
	\[
	\|\widehat\bu_n\|_{\bH(\Oas)}
	= \int_{\Oas}\sigma(\widehat\bu_n): \varepsilon(\widehat\bu_n)\dx
	\to \int_{\Oas}\sigma(\bu^*): \varepsilon(\bu^*)\dx = \|\bu^*\|_{\bH(\Oas)}.
	\]
	
	Finally, combining the weak convergence \(\widehat\bu_n \rightharpoonup \bu^*\) 
	and the  convergence \(\|\widehat\bu_n\|_{\bH(\Oas)} \to \|\bu^*\|_{\bH(\Oas)}\), 
	we conclude the strong convergence \(\widehat\bu_n \to \bu^*\) in \(\bH(\Oas)\).
\end{proof}

\begin{theorem}
	Each of the problems 
	\eqref{eq:minimization_compliance}--\eqref{eq:minimization_area}
	allows its own solution.
\end{theorem}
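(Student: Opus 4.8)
The plan is to exploit the compactness of the admissible set together with the continuity of the three functionals, so that existence follows from the Weierstrass extreme value theorem: a continuous real-valued function on a compact metric space attains both its infimum and its supremum. Since $\bO_{2\pi}$ has already been shown to be compact in the metric $d$, it remains only to verify that $\C$, $\M$ and $\A$ are continuous on $\bO_{2\pi}$. Because $(\bO_{2\pi},d)$ is a metric space, continuity is equivalent to sequential continuity, so I would fix an arbitrary $\Oas\in\bO_{2\pi}$ and a sequence $\Oan\to\Oas$ and show that $\mathcal F(\Oan)\to\mathcal F(\Oas)$ for each of the three functionals $\mathcal F$.

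The mechanism for passing to the limit is the change of variables $\bphi_n$ from \eqref{eq:backmap}, which transports every integral over the moving domain $\Oan$ to the fixed reference domain $\Oas$, introducing the Jacobian factors $\bI+\bJ_n$ and $\det(\bI+\bJ_n)$ from \eqref{eq:jacobi_backmap}. On $\Oas$ the transported displacements are exactly the mapped fields $\widehat\bu_n$, for which Lemma~\ref{le:strong_convergence} provides strong convergence $\widehat\bu_n\to\bu^*$ in $\bH(\Oas)$, while $\bJ_n\to\bnull$ and $\det(\bI+\bJ_n)\to1$. For the compliance the transported integrand is bilinear in $\bnabla\widehat\bu_n$, so strong convergence in $\bH(\Oas)$ (equivalently in $H^1$, by Korn's inequality) yields $L^1$-convergence of the integrand and hence $\C(\Oan)\to\C(\Oas)$. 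For the area I would write $\det(\bI+\bnabla\bu)=1+\div\bu+\det(\bnabla\bu)$; the affine part converges in $L^1$, and the quadratic term $\det(\bnabla\widehat\bu_n)$ converges in $L^1$ as a product of two $L^2$-convergent factors, so that together with the Jacobian convergence one obtains $\A(\Oan)\to\A(\Oas)$.

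The main obstacle is the von Mises functional $\M$, whose integrand $(\sigma_d:\sigma_d)^{p/2}=|\sigma_d(\bua)|^p$ grows like the $p$-th power of the gradient. Lemma~\ref{le:strong_convergence} only furnishes strong $H^1$-convergence, i.e.\ $L^2$-convergence of the stresses, which is insufficient to pass to the limit in an $L^p$-integral once $p>2$. To close this gap I would establish uniform integrability of the family $\{|\sigma_d(\widehat\bu_n)|^p\}_{n\in\mN}$: a uniform a priori bound of $\widehat\bu_n$ in $W^{1,q}(\Oas)$ for some $q>p$, coming from elliptic (Korn) regularity for the transported system whose coefficients are uniformly bounded in $n$ and whose geometry is now fixed, provides exactly such uniform integrability. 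Combining this with the almost-everywhere convergence $\sigma_d(\widehat\bu_n)\to\sigma_d(\bu^*)$ extracted along a subsequence from the $L^2$-convergence, Vitali's convergence theorem then gives $L^p$-convergence of $\sigma_d(\widehat\bu_n)$, and hence $\M(\Oan)\to\M(\Oas)$.

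Once continuity of all three functionals on the compact set $\bO_{2\pi}$ is secured, the two infima in \eqref{eq:minimization_compliance}--\eqref{eq:minimization_vonmises} and the supremum in \eqref{eq:minimization_area} are attained, which is precisely the claim. The only delicate point, and the one I would spend most effort on, is the uniform-integrability estimate needed for the von Mises functional; the compliance and the area are comparatively routine consequences of Lemma~\ref{le:strong_convergence} via the change of variables described above.
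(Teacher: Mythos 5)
Your overall strategy is exactly the paper's: compactness of $\bO_{2\pi}$, sequential continuity of the functionals obtained by transporting the integrals to the fixed domain $\Oas$ via the maps \eqref{eq:backmap}, the strong convergence of Lemma~\ref{le:strong_convergence}, and then the generalized extreme value theorem. The paper, however, writes this argument out only for the compliance $\C$ and dismisses $\M$ and $\A$ with ``in complete analogy.'' Your treatment of the area functional is a correct and worthwhile elaboration of that claim: in two dimensions $\det(\bI+\bnabla\bu)=1+\div\bu+\det(\bnabla\bu)$ is affine-plus-quadratic in the gradient, so strong $L^2$-convergence of $\bnabla\widehat\bu_n$ (equivalent to convergence in $\bH(\Oas)$ by Korn's inequality) gives $L^1$-convergence of the integrand, and together with $\bJ_n\to\bnull$, $\det(\bI+\bJ_n)\to 1$ this yields $\A(\Oan)\to\A(\Oas)$. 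Up to this point your proposal coincides with, and is more complete than, the paper's proof.

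For the von Mises functional you correctly identify that the analogy genuinely breaks down when $p>2$: the $H^1$-convergence supplied by Lemma~\ref{le:strong_convergence} gives only $L^2$-convergence of the stresses, which does not allow passage to the limit in $\int_{\Oas}|\sigma_d(\widehat\bu_n)|^p\dx$. This is a real gap in the paper's own argument, and your Vitali-plus-higher-integrability repair is the natural route. However, the step you lean on --- a uniform bound for $\widehat\bu_n$ in $W^{1,q}(\Oas)$ for some $q>p$ ``from elliptic regularity'' --- is asserted rather than available, and in the paper's own setting it is doubtful for the exponent actually used. Meyers-type estimates for the transported systems give only some $q=2+\epsilon$; beyond that, the attainable integrability is capped by the singularities at the fixed external boundary, where Dirichlet and Neumann conditions meet at the corners of the square and where the authors themselves note the solution is singular. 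Generically $\bnabla\bu\sim r^{\lambda-1}$ there with an exponent $\lambda<1$ depending on the corner angle, the boundary conditions and the Lam\'e constants (note $\nu=0.48$, nearly incompressible), so $\bnabla\bu$ lies in $L^q$ only for $q<2/(1-\lambda)$; no uniform $W^{1,q}$ bound with arbitrary $q>p$ can be expected, and for $p=5$ --- the value used in the numerics --- the requirement $q>5$ is far beyond what either Meyers or corner-regularity theory supplies, and even the finiteness of $\M$ itself becomes questionable. Your argument can be salvaged for $p$ below the integrability threshold set by these fixed-boundary singularities, by proving the uniform bound with localized (corner-weighted) estimates and exploiting that the maps \eqref{eq:backmap} are the identity near $\Gamma$ while near the smooth Neumann cut boundaries local regularity of any order is uniform in $n$; but as stated the regularity claim is a genuine gap --- one that, to be fair, the paper hides inside the words ``in complete analogy.''
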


\begin{proof}
	For the sake of brevity, we present the proof only for \(\C(\balpha)\). 
	In complete analogy, we can get the result for \(\M\) and \(\A\), 
	respectively.
	
	Let \(\{\Oan\}_{n\in\mN}\subset\bO_{2\pi}\) and \(\Oas\in\bO_{2\pi}\) such that 
	\(\Oan\to\Oas\) as \(n\to \infty\). 
	Using the mappings defined in \eqref{eq:backmap}, 
	we rewrite the functional  \(\C(\Oan)\) as an integral 
	over the domain \(\Oas\) as 
	\begin{align*}
	\C(\Oan) = \int_{\Oan}\sigma(&\bu_n): \varepsilon(\bu_n)\dx \\
	&= \int_{\Oas}\mC:\big((\bI + \bJ_n)^{-1}\bnabla\widehat\bu_n\big):
	\big((\bI + \bJ_n)^{-1}\bnabla\widehat\bu_n\big)\det(\bI + \bJ_n)\dx. 
	\end{align*}
	From the Lemma \ref{le:strong_convergence}, we thus conclude that 
	\begin{align*}
	\C(\Oan) = \int_{\Oas}\mC:\big((\bI + \bJ_n)^{-1}\bnabla\widehat\bu_n\big):
	\big((\bI + \bJ_n&)^{-1}\bnabla\widehat\bu_n\big)\det(\bI + \bJ_n)\dx \\ 
	&\to \int_{\Oas}\sigma(\bu^*): \varepsilon(\bu^*)\dx =  \C(\Oas).
	\end{align*}
	Hence, the claim follows from the generalized extreme value theorem
	since \(\C\) is a continuous functional.
\end{proof}

\begin{remark}
	We cannot say anything about the uniqueness of the minimizer
	in each case. Especially, the shape optimization problem may 
	have a large number of local minimizers.
\end{remark}

\section{Sensitivity analysis}
\label{sct:sensitivity}
The most common way to solve optimization problems 
numerically is to use gradient based algorithms, for example 
gradient descent or the quasi-Newton method. To define the gradient 
we use the concept of shape calculus. We begin with an introduction 
of the basic notations and definitions, the finish with the expression 
of the shape gradient for each of the considered functionals. 
For a general overview of shape calculus, we refer the reader, 
for example, to \cite{allaire2021shape, henrot2018shape, 
plotnikov2023geometric, sokolowski1992introduction}.

\subsection{Fundamentals of shape calculus}
Let us consider an arbitrary shape functional \(\F: \bO_{2\pi}\to \mR\) 
and perturbation fields \(\btheta_i\in W^{1,\infty}(\Oa)^2\) 
for \(i=1,\dots,N\) such that \(\btheta_i(\bx) = 0\) if \(\bx\in 
\Gamma\cup\partial(\bomega_{\balpha}\setminus\omega_i)\), 
i.e., it only perturbs the boundary of the respective cut. Then,
for a sufficiently small parameter \(\epsilon>0\), we define 
the perturbed body as 
\[
	\Omega_{\balpha_{\epsilon_i}} := (\Id + \epsilon\btheta_i)(\Oa).
\]

Since we consider cuts of fixed shape and size, the only thing 
we can perturb is their angles. So we define \(\btheta_i\) as 
\begin{equation}\label{eq:pertub_field}
	\btheta_i(\bx) := \chi_i(\bx)\begin{bmatrix} -x_2 \\ 
		\phantom{+}x_1 \end{bmatrix},
\end{equation}
where \(\chi_i\) defined in \eqref{eq:cut_characteristic_function}. 
The whole perturbation in the neighborhood of the cut is 
expressed as
\[
	(\Id + \epsilon\btheta_i)(\bx) =  
	\begin{bmatrix} 1 & -\epsilon \\ \epsilon & \phantom{+}1 \end{bmatrix} 
	\begin{bmatrix} x_1 \\ x_2 \end{bmatrix} 
	\quad\text{if}\quad\bx\in B_{r_i},
\]
which define a linearized \(\epsilon\) angle rotation, see 
\cite{karnaev2022optimal} for the linearization. In the rest 
of the domain, the perturbation is an identical mapping
\[
	(\Id + \epsilon\btheta_i)(\bx) = \Id \quad\text{if}\quad\bx\in 
	\Omega\setminus B_{R_i}.
\]
We refer to Figure~\ref{fig:perturbation} for an illustration.

\begin{figure}[hbt]
	\centering
	\includegraphics[width=0.8\linewidth]{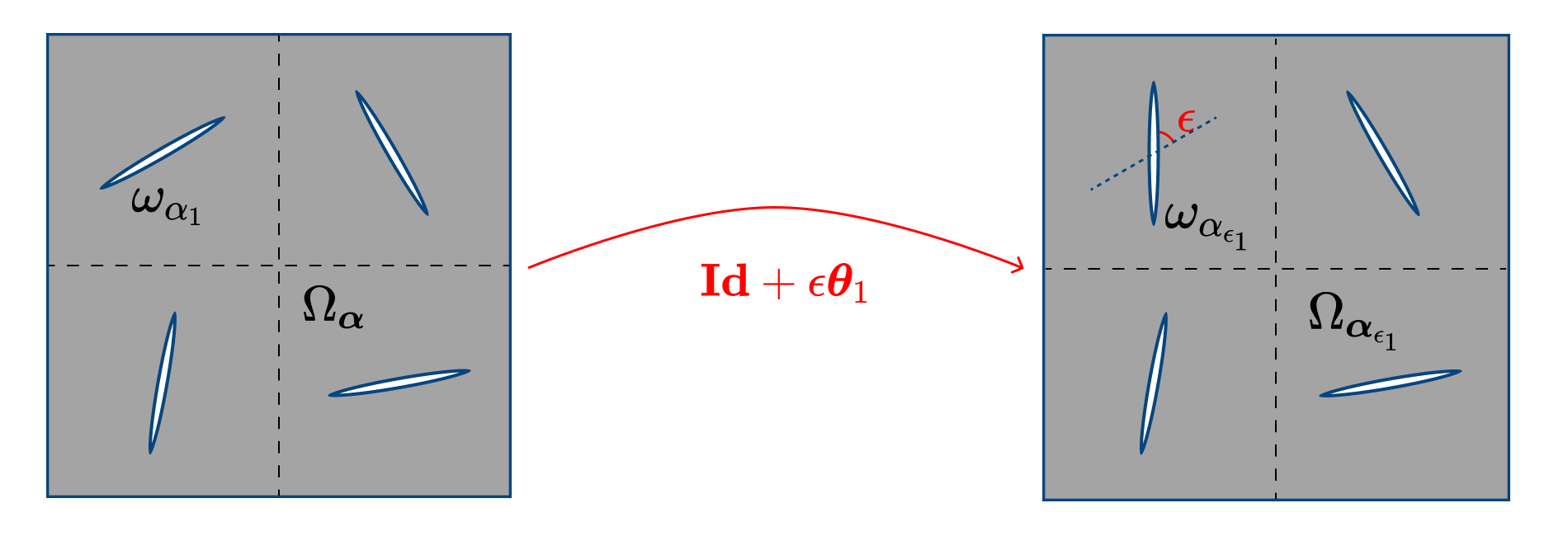}
	\caption{
		Variation \(\Omega_{\balpha_{\epsilon_i}}\)of a shape 
		\(\Oa\) according to a deformation field \(\btheta_1\).
	}
	\label{fig:perturbation}
\end{figure}

We can define the derivative of the functional along 
the direction given by the perturbation as follows.

\begin{definition}\label{def:shape_derivative}
	A \textbf{shape derivative} of the shape functional \(\F\) 
	is defined as Gateaux derivative at \(\Oa\) in direction 
	of the vector field $\btheta_i$, $i=1,\dots,N$, i.e.,
	\[
		\d\F(\Oa)\langle\btheta_i\rangle 
		:=  \frac{\d}{\d\epsilon}\F((\Id + \epsilon\btheta_i)(\Oa))\Big |_{\epsilon=0}
	\]
	Thus, we define a \textbf{shape gradient} as 
	\[
		\D\F(\Oa)\langle\bTheta\rangle := 
		\big[\d\F(\Oa)\langle\btheta_1\rangle, 
		\dots, \d\F(\Oa)\langle\btheta_N\rangle\big]^\top
		\quad\text{where}\quad \bTheta = [\btheta_1, \dots, \btheta_N].
	\]
\end{definition}

In addition, we introduce the derivative of the displacement 
field \(\bua\), which quantifies the sensitivity of the solution 
to \eqref{eq:elasticity_system} with respect to variations in 
the domain. We follow the Lagrangian approach and 
introduce the next definition.

\begin{definition}\label{def:lagrangian_derivative}
	A \textbf{Lagrangian derivative} denoted by 
	\(\dbua\langle\btheta_i\rangle\), \(i=1,\dots,N\), 
	and define by
	\[
		\dbua\langle\btheta_i\rangle
		:= \frac{\d}{\d\epsilon}\big(\bu_{\balpha_{\epsilon_i}}\circ
		(\Id+\epsilon\btheta_i)\big)\Big |_{\epsilon=0},
	\]
	where \(\bu_{\balpha_{\epsilon_i}}\) and \(\bua\) are 
	the solutions of \eqref{eq:elasticity_system} in the 
	domains \(\Omega_{\balpha_{\epsilon_i}}\) and \(\Oa\),
	respectively.
\end{definition}

In the context of unconstrained shape optimization, the shape
derivative is used to identify a direction \(\btheta\) of deformation 
such that \(\d\F(\Oa)(\btheta)< 0\). This direction of deformation 
serves as a descent direction in an appropriate optimization algorithm, 
which allows the minimization of the objective function \(\F(\Oa)\). 
Before proceeding to the shape derivatives of the functionals
under consideration, we need to note that the shape derivative 
can be expressed via the integral on the boundary of the domain.

\begin{remark}\label{rem:hadamard_form}
	In the case of a sufficiently regular domain \(\Oa\), 
	due to Hadamard's structure theorem (see e.g.\ 
	\cite{henrot2018shape,sokolowski1992introduction}),
	we can conclude that the value of the derivative 
	\(\d\F(D)\langle\cdot\rangle\) depends only on the normal 
	component of the vector field \(\btheta\) at the boundary 
	\(\partial\Oa\), i.e.,
	\[
		\d\F(\Oa)\langle\btheta\rangle =  \int_{\partial\Oa}v_{\Oa}(\btheta\cdot\bn)\ds.
	\]
	Here, \(v_{\Oa}:\partial\Oa\to\mR\) is a scalar field whose 
	expression depends on the solutions of the underlying
	boundary value problem and the functional form. The 
	expression is commonly called the \textbf{Hadamard's form} 
	of the shape derivative.
\end{remark}
\subsection{Shape derivatives}
Next follow statements about the system for the Lagrangian 
derivative of the displacement field \(\bua\) and about 
expressions for the shape derivatives of functionals \(\C(\Oa)\) 
and \(\M(\Oa)\). We do not give proofs for these statements
as they are derived by standard techniques as found for 
example in \cite{allaire2021shape}.

The next lemma characterizes the Lagrangian derivative 
of the displacement field \(\bua\).

\begin{lemma}\label{le:lagrangian_derivative}
	The Lagrangian derivative \(\dot\bu\in H^1(\Oa)^2\) 
	of the solution to \eqref{eq:elasticity_system} satisfies
	\begin{equation}\label{eq:lagrangian_derivative_system}
		\left\{\;
		\begin{aligned}
			&\div(\sigma(\dbua\langle\btheta_i\rangle) = 
			\div\Big((\mC : \bnabla\bua)\bnabla\btheta^\top + 
			\mC:(\bnabla\btheta_i\bnabla\bua)&   \\[1ex]  
			&\hspace{66mm} - \div(\btheta_i)\,\sigma(\bua)
			 - \bf\otimes\btheta_i\Big)\quad\text{in}\quad \Oa, \\[1ex]
			&\sigma(\dbua)\bn =  \Big((\mC : \bnabla\bua)\bnabla\btheta^\top 
			+ \mC:(\bnabla\btheta\bnabla\bua) - \div(\btheta_i)\sigma(\bua)\Big)\bn
			\quad\text{on}\quad \partial\omega_i, \\[1ex]
			&\sigma(\dbua)\bn =  \bnull  \quad\text{on}
			\quad \GN\cup\partial(\bomega_{\balpha}\setminus\omega_i), \\[1ex]
			&\dbua = \bnull \hspace{13mm}\text{on}\quad\GD,
		\end{aligned}
		\right.
	\end{equation}
	for any \(i=1,\dots,N\).
	An equivalent variational formulation reads as
	\begin{equation}\label{eq:lagrangian_derivative_variational_identity}
	\begin{aligned}
		\int_{\Oa}\sigma(\dbua\langle\btheta_i\rangle) :\varepsilon(\bv) = \int_{\Oa}\big(\mC:(\bnabla\btheta_i\bnabla\bua) : \bnabla\bv 
		+ \mC:\bnabla\bua:(\bnabla\btheta_i\bnabla\bv)\big)\dx& \\ 
		-\int_{\Oa}\big(\div(\btheta_i)\sigma(\bua) : \varepsilon(\bv) - \div(\bf\otimes\btheta_i)\cdot\bv\big)\dx& \\
		 \forall\,\bv\in\bH_0(\Oa)&.
	\end{aligned}
	\end{equation}
\end{lemma}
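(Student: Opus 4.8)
The plan is to follow the standard material-derivative (Lagrangian) approach: transport the perturbed state equation back to the fixed reference domain $\Oa$, establish that the transported solution depends differentiably on the perturbation parameter $\epsilon$, and then differentiate the resulting parametrized weak form at $\epsilon=0$. Writing $F_\epsilon := \bI + \epsilon\bnabla\btheta_i$ for the Jacobian of $\Id+\epsilon\btheta_i$ and setting $\widehat\bu_\epsilon := \bu_{\balpha_{\epsilon_i}}\circ(\Id+\epsilon\btheta_i)\in\bH(\Oa)$, the change of variables induced by $\Id+\epsilon\btheta_i$ in \eqref{eq:elasticity_variational_identity} posed on $\Omega_{\balpha_{\epsilon_i}}$ turns it into a weak problem on the fixed domain $\Oa$ whose coefficients depend on $\epsilon$ through $F_\epsilon^{-1}$, $\det F_\epsilon$, and the pulled-back load $\bf\circ(\Id+\epsilon\btheta_i)$. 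This is exactly the transformation already used in the proof of Lemma~\ref{le:strong_convergence}, now with $F_\epsilon$ in place of $\bI+\bJ_n$.

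First I would establish differentiability of $\epsilon\mapsto\widehat\bu_\epsilon$. To this end I would view the transported weak form as the zero set of a map $\mathcal{G}(\epsilon,\bw)$ from $\mR\times\bH(\Oa)$ into the dual of $\bH_0(\Oa)$. The bilinear part is uniformly coercive on $\bH_0(\Oa)$ for $\epsilon$ small, since $F_\epsilon\to\bI$ and $\det F_\epsilon\to1$ as $\epsilon\to0$, while coercivity at $\epsilon=0$ is guaranteed by Korn's inequality together with the positivity of $\mC$; the dependence of the coefficients on $\epsilon$ is smooth, as $F_\epsilon^{-1}$ is given by its Neumann series and $\det F_\epsilon$ is polynomial in $\epsilon$. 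The implicit function theorem then yields a $C^1$ branch $\epsilon\mapsto\widehat\bu_\epsilon$ with $\widehat\bu_0=\bua$, and its derivative $\dbua\langle\btheta_i\rangle:=\tfrac{\d}{\d\epsilon}\widehat\bu_\epsilon|_{\epsilon=0}$ lies in $\bH_0(\Oa)$ because $\widehat\bu_\epsilon=\bg$ on $\GD$ for every $\epsilon$ (as $\btheta_i$ vanishes there, one may work relative to a fixed extension of $\bg$). Establishing this differentiability is the genuine analytical obstacle; once it is in hand, the remaining computation is purely algebraic.

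Next I would differentiate the transported weak form at $\epsilon=0$, using the elementary identities $\tfrac{\d}{\d\epsilon}F_\epsilon^{-1}|_{0}=-\bnabla\btheta_i$, $\tfrac{\d}{\d\epsilon}\det F_\epsilon|_{0}=\div\btheta_i$, and $\tfrac{\d}{\d\epsilon}\,\bf\circ(\Id+\epsilon\btheta_i)|_{0}=(\bnabla\bf)\btheta_i$. Keeping the transported test function $\bv\in\bH_0(\Oa)$ fixed, the chain rule produces one term $\int_{\Oa}\sigma(\dbua\langle\btheta_i\rangle):\varepsilon(\bv)\dx$ from differentiating $\widehat\bu_\epsilon$, together with correction terms coming from $F_\epsilon^{-1}$ in each argument of $\mC$, from $\det F_\epsilon$, and from the load. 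Collecting these and using $\div(\bf\otimes\btheta_i)=(\bnabla\bf)\btheta_i+\bf\,\div\btheta_i$ together with the symmetry of $\mC$ reproduces the variational identity \eqref{eq:lagrangian_derivative_variational_identity}; care must be taken with the ordering of the matrix products $\bnabla\btheta_i\bnabla\bua$ versus $\bnabla\bua\bnabla\btheta_i$ and with the signs, which follow from the transpose convention in $\bnabla(\cdot)\,F_\epsilon^{-1}$.

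Finally I would pass from the weak to the strong form \eqref{eq:lagrangian_derivative_system}. Integrating $\int_{\Oa}\sigma(\dbua):\varepsilon(\bv)\dx$ by parts against the right-hand side and invoking the fundamental lemma of the calculus of variations gives the interior equation. Choosing test functions supported near the respective boundary pieces then reads off the boundary conditions: on $\partial\omega_i$ one obtains the inhomogeneous Neumann condition carrying the $\btheta_i$-terms; on $\GN\cup\partial(\bomega_{\balpha}\setminus\omega_i)$ the data vanish because $\btheta_i\equiv\bnull$ there, giving the homogeneous traction; and on $\GD$ the condition $\dbua=\bnull$ has already been noted above. In complete analogy with the references cited in the text, this establishes the lemma.
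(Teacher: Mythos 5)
The paper itself gives no proof of this lemma---it explicitly defers to ``standard techniques'' in the cited literature---and your argument is precisely that standard material-derivative approach: pull the perturbed problem back to the fixed domain $\Oa$, obtain differentiability of $\epsilon\mapsto\widehat\bu_\epsilon$ via uniform coercivity and the implicit function theorem, differentiate the parametrized weak form at $\epsilon=0$ using $\frac{\d}{\d\epsilon}F_\epsilon^{-1}\big|_0=-\bnabla\btheta_i$, $\frac{\d}{\d\epsilon}\det F_\epsilon\big|_0=\div\btheta_i$, and then integrate by parts to recover the strong form. Your proposal is correct and complete in outline; the transpose/ordering caveats you flag, and the extra regularity of $\bf$ needed to differentiate the pulled-back load, are real points but are already implicit in the lemma's own statement (which contains $\div(\bf\otimes\btheta_i)$), so nothing essential is missing.
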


The shape derivative of the compliance \(\C(\Oa)\) 
is presented in the next proposition.

\begin{proposition}\label{pr:comopliance_shape_derivative}
	The shape derivative of \(\C(\Oa)\) is given by
	\begin{equation}\label{eq:compliance_shape_derivative_expression}
		\d\C(\Oa)\langle\btheta_i\rangle = \int_{\partial\omega_i} \big(\sigma(\bua):\varepsilon(\bua)\big)
		\big(\btheta_i\cdot\bn\big)\ds,
	\end{equation}
	where \(i=1,\dots,N\).
\end{proposition}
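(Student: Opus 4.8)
The plan is to exploit the self-adjoint structure of the compliance, which is precisely what makes the whole volume contribution collapse so that only a boundary term remains. I would introduce the Lagrangian
\[
\mathcal{L}(\Oa,\bv,\bp) := \int_{\Oa}\sigma(\bv):\varepsilon(\bv)\dx + \int_{\Oa}\sigma(\bv):\varepsilon(\bp)\dx - \int_{\Oa}\bf\cdot\bp\dx,
\]
with state \(\bv\in\bH(\Oa)\) and adjoint \(\bp\in\bH_0(\Oa)\). Stationarity with respect to \(\bp\) over \(\bH_0(\Oa)\) reproduces \eqref{eq:elasticity_variational_identity}, so the stationary state is \(\bv=\bua\). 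Stationarity with respect to \(\bv\in\bH_0(\Oa)\), together with the symmetry of \(\mC\), gives \(\int_{\Oa}\sigma(2\bua+\bp):\varepsilon(\delta\bv)\dx=0\) for all \(\delta\bv\in\bH_0(\Oa)\), whence \(\bp=-2\bua\) by coercivity. This self-adjointness, the adjoint being the state up to a factor, is the crux of the argument.

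Then I would invoke Céa's theorem. Since \(\btheta_i\) is supported in \(B_{R_i}\) and vanishes on \(\Gamma\) by \eqref{eq:pertub_field}--\eqref{eq:cut_characteristic_function}, the Dirichlet datum on \(\GD\) is unperturbed, so the material derivatives of the state and of the adjoint both lie in \(\bH_0(\Oa)\); by the two stationarity conditions they do not contribute, and the shape derivative reduces to the partial derivative of \(\mathcal{L}\) with \(\bua\) and \(\bp\) frozen. Differentiating the three domain integrals by the transport (Reynolds) formula turns each into a boundary integral, so
\[
\d\C(\Oa)\langle\btheta_i\rangle = \int_{\partial\Oa}\big(\sigma(\bua):\varepsilon(\bua) + \sigma(\bua):\varepsilon(\bp) - \bf\cdot\bp\big)(\btheta_i\cdot\bn)\ds.
\]
Substituting \(\bp=-2\bua\) collapses the integrand to \(-\sigma(\bua):\varepsilon(\bua)+2\bf\cdot\bua\). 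Because \(\btheta_i\) vanishes on \(\Gamma\cup\partial(\bomega_{\balpha}\setminus\omega_i)\), only \(\partial\omega_i\) survives, and if the body force does not act on the cut (e.g.\ \(\bf\equiv\bnull\), the natural setting for displacement-driven stretching) the term \(2\bf\cdot\bua\) drops, leaving exactly \eqref{eq:compliance_shape_derivative_expression} once the orientation of the unit normal on the internal boundary \(\partial\omega_i\) is fixed. The result is automatically in the pure normal form anticipated by Remark~\ref{rem:hadamard_form}, so no separate elimination of tangential terms is needed.

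The hard part is not this algebra but the analytic justification of Céa's identity: one must show that \(\epsilon\mapsto\buae\circ(\Id+\epsilon\btheta_i)\) is differentiable into \(H^1(\Oa)^2\) with limit the Lagrangian derivative of Definition~\ref{def:lagrangian_derivative}, lying in \(\bH_0(\Oa)\); this well-posedness, which underlies the vanishing of the material-derivative terms, is the content of Lemma~\ref{le:lagrangian_derivative}. A fully equivalent route avoids the adjoint: transporting \(\C\) back to \(\Oa\) and differentiating yields \(2\int_{\Oa}\sigma(\bua):\varepsilon(\dbua\langle\btheta_i\rangle)\dx\) plus explicit transport terms, and since \(\dbua\langle\btheta_i\rangle\in\bH_0(\Oa)\) the first integral equals \(2\int_{\Oa}\bf\cdot\dbua\langle\btheta_i\rangle\dx\) by \eqref{eq:elasticity_variational_identity}; inserting \eqref{eq:lagrangian_derivative_variational_identity} then cancels the volume contributions against the transport terms and returns the same boundary integral. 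The delicate point of this second route is that \eqref{eq:lagrangian_derivative_variational_identity} is stated for test functions in \(\bH_0(\Oa)\), whereas the cancellation wants to test with \(\bua\notin\bH_0(\Oa)\); this is handled by splitting off a lift of \(\bg\) supported near \(\GD\), where every \(\btheta_i\)-dependent integrand vanishes. I expect this bookkeeping, together with keeping track of the normal orientation on \(\partial\omega_i\), to be the main source of error, which is exactly why the self-adjoint Céa computation is the cleaner choice.
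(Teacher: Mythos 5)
Your route---C\'ea's Lagrangian method exploiting the self-adjoint structure of the compliance---is exactly the ``standard technique'' the paper alludes to when it omits this proof, but your execution has a genuine error in the one step that matters: the identification of the adjoint. The adjoint is constrained to lie in \(\bH_0(\Oa)\), i.e.\ \(\bp=\bnull\) on \(\GD\). The stationarity condition \(\int_{\Oa}\sigma(2\bua+\bp):\varepsilon(\delta\bv)\dx=0\) for all \(\delta\bv\in\bH_0(\Oa)\) therefore does \emph{not} give \(\bp=-2\bua\): that function is inadmissible here, because the paper's loading is displacement-driven, \(\bua=\bg\neq\bnull\) on \(\GD\). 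By Lax--Milgram, the unique stationary \(\bp\in\bH_0(\Oa)\) is \(\bp=-2\bu_{\bf}\), where \(\bu_{\bf}\in\bH_0(\Oa)\) solves the elasticity problem with body force \(\bf\) and \emph{homogeneous} Dirichlet data; in particular \(\bp=\bnull\) when \(\bf=\bnull\). Your identification \(\bp=-2\bua\) is correct only when \(\bg=\bnull\) (the classical force-loaded setting of Allaire), and it is precisely there that the integrand \(-\sigma(\bua):\varepsilon(\bua)+2\bf\cdot\bua\) you obtain is the right one. Carried into the paper's setting it produces \(-\int_{\partial\omega_i}\sigma(\bua):\varepsilon(\bua)(\btheta_i\cdot\bn)\ds\), the \emph{negative} of \eqref{eq:compliance_shape_derivative_expression}, and this cannot be repaired by ``fixing the orientation of the unit normal'': the paper fixes \(\bn\) as the outward normal of \(\Oa\). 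The sign is not a convention but mechanics: under displacement control, moving \(\partial\omega_i\) with \(\btheta_i\cdot\bn>0\) (shrinking the hole, adding material) stiffens the body and \emph{increases} the stored energy \(\C\), so the density must enter with a plus sign, whereas under dead loading added material decreases the compliance---hence the classical minus sign you reproduced.

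With the correct adjoint the proof closes at once: for \(\bf=\bnull\) one has \(\bp=\bnull\), stationarity in \(\bv\) at \(\bua\) holds because \(\int_{\Oa}\sigma(\bua):\varepsilon(\bw)\dx=\int_{\Oa}\bf\cdot\bw\dx=0\) for all \(\bw\in\bH_0(\Oa)\), and the frozen-coefficient (Reynolds) derivative of \(\int_{\Oa}\sigma(\bua):\varepsilon(\bua)\dx\) is exactly \eqref{eq:compliance_shape_derivative_expression}. Note also that your ``second route,'' done carefully, does not return your boundary integral but the paper's: the material derivative \(\dbua\langle\btheta_i\rangle\) lies in \(\bH_0(\Oa)\), so \(2\int_{\Oa}\sigma(\bua):\varepsilon(\dbua\langle\btheta_i\rangle)\dx=2\int_{\Oa}\bf\cdot\dbua\langle\btheta_i\rangle\dx=0\) when \(\bf=\bnull\)---no lift of \(\bg\) is needed for this step---and the remaining transport terms
\[
\int_{\Oa}\big[\sigma(\bua):\varepsilon(\bua)\,\div(\btheta_i)-2\,\sigma(\bua):\big(\bnabla\bua\,\bnabla\btheta_i\big)\big]\dx
\]
integrate by parts, using \(\div\sigma(\bua)=\bnull\) in \(\Oa\), \(\sigma(\bua)\bn=\bnull\) on \(\partial\omega_i\) and \(\btheta_i=\bnull\) near \(\Gamma\), to \(+\int_{\partial\omega_i}\sigma(\bua):\varepsilon(\bua)(\btheta_i\cdot\bn)\ds\). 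Finally, be aware that for genuinely nonzero \(\bf\) the clean formula \eqref{eq:compliance_shape_derivative_expression} acquires additional terms involving \(\bu_{\bf}\); the reason they disappear in the paper's regime is that the adjoint vanishes identically, not that \(\bf\) merely ``does not act on the cut.''
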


The shape derivative of the \(L^p\)-norm of the von 
Mises stress \(\M(\Oa)\) is given in the following 
proposition, see also \cite{caubet2023shape}.

\begin{proposition}\label{pr:vonmises_derivative}
	The shape derivative \(\M(\Oa)\) is given by
	\begin{equation}\label{eq:vonmises_derivative_expression}
		\begin{aligned}
			\d\M(\Oa)\langle\btheta_i\rangle = \frac{1}{p}\bigg(\int_{\omega_i} 
			\big((&\sigma_d(\bua):\sigma_d(\bua))^{p/2} - \sigma(\bpa):\varepsilon(\bua) \\
			&+ (\bf\cdot\bpa)\big)\big(\btheta\cdot\bn\big)\ds\bigg)\left(\int_{\Oa} 
			(\sigma_d(\bua):\sigma_d(\bua))^{p/2}\dx\right)^{1/p - 1},
		\end{aligned}
	\end{equation}
	where \(i=1,\dots,N\) and \(\bpa\in H^1(\Oa)^2\) satisfies the following adjoint system
	\begin{equation}\label{eq:vonmises_adjoint_system}
		\left\{
		\begin{aligned}
			&\div(\sigma(\bpa)) = 2p\mu\div\Big(\sigma_d(\bua)(\sigma_d(\bua):\sigma_d(\bua))^{p/2-1}\Big)  \quad\text{in}\quad\Oa, \\[1ex]
			&\sigma(\bpa)\bn = 2p\mu\Big(\sigma_d(\bua)(\sigma_d(\bua):\sigma_d(\bua))^{p/2-1}\Big) \bn \quad\text{on}\quad\GN\cup\partial \bomega_{\balpha}, \\[1ex]
			&\bpa = \bnull \quad\text{on}\quad \GD \\[1ex]
		\end{aligned}
		\right.
	\end{equation}
	with an equivalent variational formulation
	\begin{equation*}\label{eq:vm_adjoint_variational_identity}
			\int_{\Oa}\sigma(\bpa) :\varepsilon(\bv) = \int_{\Oa}p(\sigma_d(\bua):\sigma_d(\bua))^{p/2-1}\sigma_d(\bua) : \sigma_d(\bv) \quad \forall\,\bv\in\bH_0(\Oa).
	\end{equation*}
\end{proposition}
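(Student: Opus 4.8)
The plan is to proceed by the standard adjoint (Lagrangian) method, using Lemma~\ref{le:lagrangian_derivative} so that the unknown Lagrangian derivative $\dbua\langle\btheta_i\rangle$ never has to be computed explicitly. First I would peel off the outer power. Writing $J(\Oa) := \int_{\Oa}(\sigma_d(\bua):\sigma_d(\bua))^{p/2}\dx$, we have $\M(\Oa) = J(\Oa)^{1/p}$, so the chain rule gives $\d\M(\Oa)\langle\btheta_i\rangle = \tfrac{1}{p}J(\Oa)^{1/p-1}\,\d J(\Oa)\langle\btheta_i\rangle$. This already produces the trailing factor $\tfrac{1}{p}\big(\int_{\Oa}(\sigma_d(\bua):\sigma_d(\bua))^{p/2}\dx\big)^{1/p-1}$ in \eqref{eq:vonmises_derivative_expression}, and it remains only to identify $\d J(\Oa)\langle\btheta_i\rangle$ with the boundary integral inside the large parentheses.

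Next I would differentiate $J$ directly. Since the integrand depends on $\bua$ only through $\bnabla\bua$, transporting the integral onto the fixed domain by $\Id+\epsilon\btheta_i$ and differentiating at $\epsilon=0$ splits $\d J$ into a purely geometric part and a state-sensitivity part. The geometric part collects the contributions of $\div(\btheta_i)$ and of the transformation of the gradient; because $\btheta_i$ is supported near $\omega_i$ and equals $\chi_i[-x_2,x_1]^\top$, these volume terms reduce after integration by parts to $\int_{\partial\omega_i}(\sigma_d(\bua):\sigma_d(\bua))^{p/2}(\btheta_i\cdot\bn)\ds$. The state-sensitivity part, obtained by differentiating $(\sigma_d:\sigma_d)^{p/2}$ and using that $\sigma_d$ is linear in $\bnabla\bu$, reads
\[
\int_{\Oa} p\,(\sigma_d(\bua):\sigma_d(\bua))^{p/2-1}\,\sigma_d(\bua):\sigma_d\big(\dbua\langle\btheta_i\rangle\big)\dx.
\]

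The crucial step is to remove $\dbua$. I observe that this last integral is exactly the right-hand side of the variational form of the adjoint system~\eqref{eq:vonmises_adjoint_system}, tested against $\bv=\dbua\langle\btheta_i\rangle$; hence it equals $\int_{\Oa}\sigma(\bpa):\varepsilon(\dbua\langle\btheta_i\rangle)\dx$. By symmetry of the stiffness tensor $\mC$ this in turn equals $\int_{\Oa}\sigma(\dbua\langle\btheta_i\rangle):\varepsilon(\bpa)\dx$, which I can now evaluate by inserting $\bv=\bpa$ into the Lagrangian-derivative identity~\eqref{eq:lagrangian_derivative_variational_identity}. This trades the state-sensitivity term for an expression in the known fields $\bua$, $\bpa$, $\btheta_i$ and $\bf$ alone, with no residual dependence on $\dbua$. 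The factor $p$ carried by the adjoint definition matches the $p$ appearing above, so it cancels and leaves coefficient one, consistent with the absence of a $p$ inside the boundary integrand of \eqref{eq:vonmises_derivative_expression}.

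Finally I would combine the geometric term with the adjoint-substituted term and integrate by parts, using the governing equation~\eqref{eq:elasticity_system} for $\bua$, the adjoint boundary conditions, and the fact that $\btheta_i$ vanishes on $\Gamma$ and on $\partial(\bomega_{\balpha}\setminus\omega_i)$, so that only $\partial\omega_i$ contributes. Collapsing the resulting volume integrals onto this single boundary is the main obstacle: one must verify that the bulk terms cancel against the divergence-form right-hand side of~\eqref{eq:lagrangian_derivative_variational_identity} and that the Hadamard structure predicted by Remark~\ref{rem:hadamard_form} emerges, leaving precisely the integrand $(\sigma_d(\bua):\sigma_d(\bua))^{p/2}-\sigma(\bpa):\varepsilon(\bua)+\bf\cdot\bpa$ weighted by $(\btheta_i\cdot\bn)$. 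Carefully tracking the normal-component contributions, and confirming that the tangential parts drop out as Hadamard's theorem guarantees, is where the bookkeeping is most delicate.
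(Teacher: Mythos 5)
Your proposal is sound and follows exactly the Lagrangian/adjoint template the paper itself relies on: although the paper omits a proof of Proposition~\ref{pr:vonmises_derivative} (deferring to standard techniques and the cited literature), its proof of Proposition~\ref{pr:area_shape_derivative} proceeds by precisely the steps you describe --- transport to the fixed domain, differentiation at $\epsilon=0$ splitting into geometric and state-sensitivity parts, elimination of $\dbua\langle\btheta_i\rangle$ by testing the adjoint identity against $\dbua\langle\btheta_i\rangle$ and \eqref{eq:lagrangian_derivative_variational_identity} against $\bpa$ (both admissible since each vanishes on $\GD$), and a final reduction to Hadamard form via Remark~\ref{rem:hadamard_form}. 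Your extra chain-rule step peeling off the outer $1/p$ power, and your observation that the factor $p$ built into the adjoint right-hand side exactly absorbs the $p$ arising from differentiating $(\sigma_d(\bua):\sigma_d(\bua))^{p/2}$, are both correct and complete the adaptation of that template to $\M(\Oa)$.
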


Finally, the next proposition gives the shape derivative of 
the area of the deformed body \(\A(\Oa)\). Since we have 
not seen an expression for this before, we shall provide 
the proof.

\begin{proposition}\label{pr:area_shape_derivative}
	The shape derivative of \(V(\Oa)\) is given by
	\begin{equation}\label{eq:area_derivative_expression}
		\d\A(\Oa)\langle\btheta_i\rangle =
		\int_{\omega_i}\big(\det(\bI + \bnabla\bua)  
		-\sigma(\bua) : \varepsilon(\bqa)
		+ \bf\cdot\bqa\big)\big(\btheta\cdot\bn\big)\dx,
	\end{equation}
	where \(i=1,\dots,N\) and  \(\bqa\in H^1(\Oa)^2\) satisfies the following adjoint system
	\begin{equation}\label{eq:area_derivative_adjoint_system}
		\left\{\;
		\begin{aligned}
			&\div(\sigma(\bqa)) = \div\big(\det(\bI+\bnabla\bua)(\bI+\bnabla\bua)^{-\top}\big) \quad\text{in}\quad\Oa, \\[1ex]
			&\sigma(\bqa)\bn =  \det(\bI+\bnabla\bua)(\bI+\bnabla\bua)^{-\top}\bn \quad\text{on}\quad\GN\cup\partial \bomega_{\balpha}, \\[1ex]
			&\bqa = \bnull \quad\text{on}\quad \GD \\[1ex]
		\end{aligned}
		\right.
	\end{equation}
	with an equivalent variational formulation
	\begin{equation*}\label{eq:vol_adjoint_variational_identity}
		\int_{\Oa}\sigma(\bqa) :\varepsilon(\bv) = \int_{\Oa}\det(\bI+\bnabla\bua)(\bI+\bnabla\bua)^{-\top} : \bnabla\bv \quad \forall\,\bv\in\bH_0(\Oa).
	\end{equation*}
\end{proposition}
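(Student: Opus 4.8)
The plan is to apply C\'ea's Lagrangian method, the standard adjoint-based technique for state-constrained shape functionals (see \cite{allaire2021shape}). I introduce the Lagrangian
\[
\mathcal{L}(\Oa,\bu,\bq):=\int_{\Oa}\det(\bI+\bnabla\bu)\dx-\int_{\Oa}\sigma(\bu):\varepsilon(\bq)\dx+\int_{\Oa}\bf\cdot\bq\dx,
\]
where \(\bu\) ranges over admissible fields (\(\bu=\bg\) on \(\GD\)) and \(\bq\in\bH_0(\Oa)\) plays the role of the multiplier. Because the state identity \eqref{eq:elasticity_variational_identity} makes the last two terms cancel whenever \(\bu=\bua\), one has \(\mathcal{L}(\Oa,\bua,\bq)=\A(\Oa)\) for \emph{every} admissible \(\bq\). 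Differentiating this identity with respect to the shape will therefore reproduce \(\d\A\), and the freedom in \(\bq\) will be exploited to eliminate the implicit dependence on \(\bua\).

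First I would fix the adjoint by imposing stationarity of \(\mathcal{L}\) in the state variable. Differentiating in a direction \(\bv\in\bH_0(\Oa)\) and invoking Jacobi's formula \(\partial_M\det(\bI+M)=\det(\bI+M)(\bI+M)^{-\top}\) with \(M=\bnabla\bua\), I obtain
\[
\partial_{\bu}\mathcal{L}\langle\bv\rangle=\int_{\Oa}\det(\bI+\bnabla\bua)(\bI+\bnabla\bua)^{-\top}:\bnabla\bv\dx-\int_{\Oa}\sigma(\bv):\varepsilon(\bq)\dx.
\]
Using the symmetry of \(\mC\) to replace \(\sigma(\bv):\varepsilon(\bq)\) by \(\sigma(\bq):\varepsilon(\bv)\) and requiring \(\partial_{\bu}\mathcal{L}\langle\bv\rangle=0\) for all \(\bv\in\bH_0(\Oa)\) yields exactly the variational adjoint problem \eqref{eq:area_derivative_adjoint_system} for \(\bq=\bqa\). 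Moreover, stationarity in \(\bq\) gives \(\partial_{\bq}\mathcal{L}\langle\bw\rangle=-\int_{\Oa}\sigma(\bua):\varepsilon(\bw)\dx+\int_{\Oa}\bf\cdot\bw\dx\), which vanishes for every \(\bw\in\bH_0(\Oa)\) precisely because \(\bua\) solves \eqref{eq:elasticity_variational_identity}.

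Since both partial derivatives of \(\mathcal{L}\) vanish at the pair \((\bua,\bqa)\), the cross terms produced by the chain rule --- one containing the Lagrangian derivative \(\dbua\) of Lemma~\ref{le:lagrangian_derivative}, the other the sensitivity of \(\bqa\) --- drop out, and only the explicit shape derivative of \(\mathcal{L}\) survives. For a domain integral \(\int_{\Oa}F\dx\) with the fields held fixed, the transport formula produces the boundary term \(\int_{\partial\Oa}F(\btheta_i\cdot\bn)\ds\). Collecting the three integrands, I arrive at
\[
\d\A(\Oa)\langle\btheta_i\rangle=\int_{\partial\Oa}\big(\det(\bI+\bnabla\bua)-\sigma(\bua):\varepsilon(\bqa)+\bf\cdot\bqa\big)(\btheta_i\cdot\bn)\ds,
\]
which is the Hadamard form of Remark~\ref{rem:hadamard_form}. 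As \(\btheta_i\) is supported near the \(i\)-th cut and vanishes on \(\Gamma\) and on \(\partial(\oa\setminus\omega_i)\) by \eqref{eq:pertub_field}, only \(\partial\omega_i\) contributes, giving the asserted expression \eqref{eq:area_derivative_expression}.

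The main obstacle is not the algebra but the rigorous justification of the envelope step: one must know that the saddle pair \((\bua,\bqa)\) depends differentiably on \(\balpha\) so that the two cross terms genuinely vanish. This reduces to the existence of the material derivative \(\dbua\) and enough regularity of the adjoint \(\bqa\); granting these (which is standard for the linear-elastic system treated here, cf.\ Lemma~\ref{le:lagrangian_derivative}), the remaining work is the matrix-calculus differentiation of \(\det(\bI+\bnabla\bua)\) and routine bookkeeping.
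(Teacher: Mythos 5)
Your argument is correct in substance and reaches the stated adjoint system and boundary expression, but it takes a genuinely different route from the paper. The paper does not use C\'ea's method: it pulls \(\A(\Oae)\) back to the fixed domain \(\Oa\) by the change of variables \(\bx\mapsto(\Id+\epsilon\btheta)(\bx)\), differentiates the transported integral at \(\epsilon=0\) --- which makes the material derivative \(\dbua\) appear inside a volume expression, see \eqref{eq:area_derivative_proof_differentiation} --- and then eliminates \(\dbua\) by duality: \(\dbua\) is inserted as test function in the adjoint identity \eqref{eq:area_derivative_adjoint_identity} and \(\bqa\) as test function in the identity \eqref{eq:lagrangian_derivative_variational_identity} of Lemma~\ref{le:lagrangian_derivative}; the resulting volume form \eqref{eq:area_derivative_proof_volform} is finally converted to the Hadamard boundary form by Green's formula, invoking Remark~\ref{rem:hadamard_form}. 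Your Lagrangian/envelope derivation is more economical: the adjoint problem \eqref{eq:area_derivative_adjoint_system} arises systematically as stationarity of \(\mathcal{L}\) in \(\bu\), and the boundary expression follows at once from the transport theorem applied to \(\mathcal{L}\) with frozen fields, with no integration by parts needed. The price is the envelope step, and there one caveat is in order: the cross term produced by the chain rule involves the \emph{Eulerian} shape derivative \(\bu'=\dbua-(\bnabla\bua)\btheta_i\), not the Lagrangian derivative \(\dbua\) as you write; the cancellation \(\partial_{\bu}\mathcal{L}\langle\bu'\rangle=0\) is legitimate only if \(\bu'\in\bH_0(\Oa)\), which demands more regularity of \(\bua\) near \(\partial\omega_i\) than plain \(H^1\) (available here by elliptic regularity, since the cuts are smooth and traction-free, but it should be stated). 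The paper's route sidesteps exactly this issue because it only ever manipulates \(\dbua\), whose variational characterization is supplied by Lemma~\ref{le:lagrangian_derivative}; in exchange it needs the extra Green's-formula step to reach the boundary form. Both derivations are otherwise at the same level of formality, and both confirm that the integral in \eqref{eq:area_derivative_expression} is to be read as the boundary integral \(\int_{\partial\omega_i}\cdots\ds\) rather than the printed \(\int_{\omega_i}\cdots\dx\), as your computation makes explicit.
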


\begin{proof}
	Since the proof does not depend on the choice of \(1\leq i \leq N\), 
	we will simplify our notation in accordance with 
	\(\btheta \equiv \btheta_i, \ \omega_i\equiv\omega,\ 
	\Oae \equiv\Omega_{\balpha_{\epsilon_i}}\) and for 
	likewise \(\buae\equiv\bu_{\balpha_{\epsilon_i}}\).
		
	For sufficiently small \(\btheta\in W^{1,\infty}(\Oa)^2\), 
	a change of variables in the shape functional \(\A(\Oae)\) yields
	\begin{align*}
		\A(\Oae) = \int_{(\Oae)_{\buae}}\dx &=  \int_{\Oae}\det(\bI+\bnabla\buae)\dx \\ 
		&= \int_{\Oa}\det(\bI+(\bI+\epsilon\bnabla\btheta)^{-1}\bnabla{\hbuae})
		\det(\bI+\epsilon\bnabla\btheta)\dx.
	\end{align*}
	By taking the derivative at \(\epsilon=0\), we obtain 
	\begin{equation}\label{eq:area_derivative_proof_differentiation}
		\begin{aligned}
			\d\A(\Oae)\langle\btheta\rangle &= \int_{\Oa}\div(\btheta)\det(\bI + \bnabla\bua)\dx \\
			&~~~~~~~~~~~~~+ \int_{\Oa}\det(\bI+\bnabla\bua)\tr(\bI+\bnabla\bua)^{-1}(\bnabla\dbua-\bnabla\btheta\bnabla\bua))\dx
			\\
			&=\int_{\Oa}\div(\btheta)\det(\bI + \bnabla\bua)\dx \\
			&~~~~~~~~~~~~~+ \int_{\Oa}\det(\bI+\bnabla\bua)(\bI+\bnabla\bua)^{-\top}:\bnabla\dbua\dx \\
			&~~~~~~~~~~~~~-\int_{\Oa}\det(\bI+\bnabla\bua)(\bI+\bnabla\bua)^{-\top}:(\bnabla\btheta\bnabla\bua)\dx.
		\end{aligned}
	\end{equation}
	
	Next, we formulate the variational identity for 
	the adjoint state \(\bqa\in \bH_0(\Oa)\) as follows:
	\begin{equation}\label{eq:area_derivative_adjoint_identity}
			\int_{\Oa} \sigma(\bqa):\varepsilon(\bv)\dx 
			=  \int_{\Oa}\det(\bI+\bnabla\bua)(\bI+\bnabla\bua)^{-\top}:\bnabla\bv\dx \quad \forall\,\bv\in\bH_0(\Oa).
	\end{equation}
	It is easy to check that the variational formulations \eqref{eq:area_derivative_adjoint_identity} 
	corresponds to the boundary value problem \eqref{eq:area_derivative_adjoint_system}. 
	By taking \(\dbua\) as test function in \eqref{eq:area_derivative_adjoint_identity} 
	and \(\bqa\) as test function in \eqref{eq:lagrangian_derivative_variational_identity}, 
	we conclude
	\begin{align*}
		\int_{\Oa}\det(\bI+\bnabla\bua)(\bI+&\bnabla\bua)^{-\top}:\bnabla\dbua\dx \\
		&= \int_{\Oa}\big(\mC:(\bnabla\btheta\bnabla\bua) : \bnabla\bqa + \mC:\bnabla\bua:(\bnabla\btheta\bnabla\bqa)\big)\dx \\
		&~~~~~~~-\int_{\Oa}\big(\div(\btheta)\sigma(\bua) : \varepsilon(\bqa) - \div(\bf\otimes\btheta)\cdot\bqa\big)\dx.
	\end{align*}
	Thus, we can rewrite \eqref{eq:area_derivative_proof_differentiation} as 
	\begin{equation}\label{eq:area_derivative_proof_volform}
		\begin{aligned}
			\d\A(\Oae)\langle\btheta\rangle &=\int_{\Oa}\div(\btheta)\det(\bI + \bnabla\bua)\dx \\
			&~~~~~~~~~~~~~+ \int_{\Oa}\big(\mC:(\bnabla\btheta\bnabla\bua) : \bnabla\bqa + \mC:\bnabla\bua:(\bnabla\btheta\bnabla\bqa)\big)\dx \\
			&~~~~~~~~~~~~~-\int_{\Oa}\big(\div(\btheta)\sigma(\bua) : \varepsilon(\bqa) - \div(\bf\otimes\btheta)\cdot\bqa\big)\dx \\
			&~~~~~~~~~~~~~-\int_{\Oa}\det(\bI+\bnabla\bua)(\bI+\bnabla\bua)^{-\top}:(\bnabla\btheta\bnabla\bua)\dx.
		\end{aligned}
	\end{equation}

	Following Remark \ref{rem:hadamard_form}, we can 
	consider that \(\btheta = (\theta\cdot\bn)\bn\) on \(\partial\omega\). 
	Therefore, by applying Green's formula to the expression
	\eqref{eq:area_derivative_proof_volform}, we arrive at
	Hadamard's form of \(\d\A(\Oa)\langle\btheta\rangle\)
	that was claimed to be proven.  
\end{proof}

\section{Numerical realization}\label{sct:numerix}
In this part, we develop the numerical method for solving the 
shape optimization problem under consideration. We employ 
both, the gradient descent method as such and its combination 
with a genetic algorithm. The intend to optimize a quadratic 
piece of skin, in which the cuts are modelled as slits with 
locations in periodic cells within the domain. We present 
the outcomes of the gradient descent method in the case 
of stretching the skin in one axis direction and the outcomes
of the combination of a genetic algorithm and the gradient 
descent method in the case of stretching in two axes 
directions.

\subsection{Optimization algorithm}
To solve the optimal cut layout problems 
\eqref{eq:minimization_compliance}--\eqref{eq:minimization_area}, 
we employ a classical gradient descent method
\begin{equation}\label{eq:grad_descent}
	\balpha_{n+1} = \balpha_{n} - 
	\gamma_n\D\F(\Oan)\langle\bTheta\rangle, 
\end{equation}
where \(\mathcal{F}\) is one of the considered functionals, 
\(\D\F(\Oan)\langle\bTheta\rangle\) is the shape gradient 
calculated in accordance with the Proposition
\ref{pr:comopliance_shape_derivative} for the related shape 
functional and the perturbation filed \(\bTheta = [\btheta_1, 
\dots, \btheta_N]\) defined by \eqref{eq:pertub_field}. The 
step size \(\gamma_n\) is found by a quadratic line search 
with at most five iterations. Note that in the case of maximization 
of the area of the deformed body, we just switched the sign 
in front of the functional to cast it into a the minimization 
problem.

Numerical experiments have shown that the shape optimization
problem under consideration has a low sensitivity and many local 
minima, which in some configurations makes the solution of the 
problem by the gradient descent method significantly more difficult, 
even when using accelerated variants such as the quasi-Newton 
or the Nesterov scheme, see \cite{fletcher,nesterov} for example. 
Therefore, inspired by the successful results of combining global 
optimization methods with continuous optimization in 
\cite{harbrecht2016optimization} for a similar shape optimization 
problem, we shall address the application of a genetic algorithm, 
cf.~\cite{haupt2004practical}.

A genetic algorithm is a stochastic method used to solve 
optimization problems. It lacks the precision of gradient-based 
methods because it does not study the function to be minimized. 
It evaluates the objective functional ({\it fitness}) for the optimization 
variables ({\it individuals}). The algorithm's process is as follows: 
an initial population of individuals evolves over multiple generations 
through simulated genetic operations of {\it mutation} and 
{\it crossover}. The fittest individuals survive and reproduce, 
advancing the population. The initial population consists of \(M\) 
individuals, obtained by the specified heuristics or generated 
randomly. To transition from one generation to the next, we follow 
these steps:
\begin{enumerate}
	\item The current population ({\it parents}) mutates and 
	produces a generation of {\it children}. 
	\item The {\it elite} are chosen from the generations of 
	parents and children with subject to fitness.
	\item The elite are crossed to produce the new children.
	\item New collected elite replaces the old population.
\end{enumerate}
As a mutation, we use a gradient descent step \eqref{eq:grad_descent} 
with step size \(\gamma_n=\|\D\F(\Oan)\langle\bTheta\rangle\|_2^{-1}\). 
Crossover children are created by intersecting the vectors of a pair of 
parents: we randomly choose two parents (here: \(p_1\) and \(p_2\)) 
and a cut point, then we exchange the components of both parents 
and create two children (here: \(c_1\) and \(c_2\)). For example:
\[
\begin{array}{l @{\qquad} c @{\qquad} l}
	p_1 = [1,2,3,4,5,6,7] & \multirow{2}{*}{$\longrightarrow$} & c_1 = [1,2,3,4,e,f,g] \\
	p_2 = [a,b,c,d,e,f,g] && c_2 = [a,b,c,d,5,6,7]
\end{array}
\]
Non-unique individuals are not allowed in the population. 
Then, starting from some stage of the fittest individual does 
not change and displaces all others in the population by 
the crossover and elite choice. When it displaces all the 
rest, the algorithm naturally stops. Note that the most 
computationally expensive part of the above algorithm is 
the computation of the solution of the direct problem and 
the calculation the shape gradient. This, however, can be
easily parallelized. 

\subsection{Computational setup}
The domain \(\Omega = (0,1)^2\), which represents the
piece of skin, is chosen as the unite square. We divide 
the domain into 3\(\times\)3 equal blocks, each of which 
being a copy of the others. Inside these blocks, we introduce
a grid of 4\(\times\)4 quadratic cells of edge size \(1/12\). 
In the center of each cell, we put an ellipse-shaped cut with 
semi-axes \(0.75/12\) and \(0.05/12\). Thus, we get \(\oa\) 
with 144 cuts in all, but only \(N=16\) design variables (the 
rotation angles of the cuts), compare Figure~\ref{fig:model}. 
We do so in order to improve the optimization results, since 
each of the components \(\balpha\) defines nine cuts at once 
which increases the sensitivity of the functional with respect 
to a change in the design variables. 

\begin{figure}[hbt]
\centering
\includegraphics[width=0.6\linewidth,trim={150 150 150 150},clip]{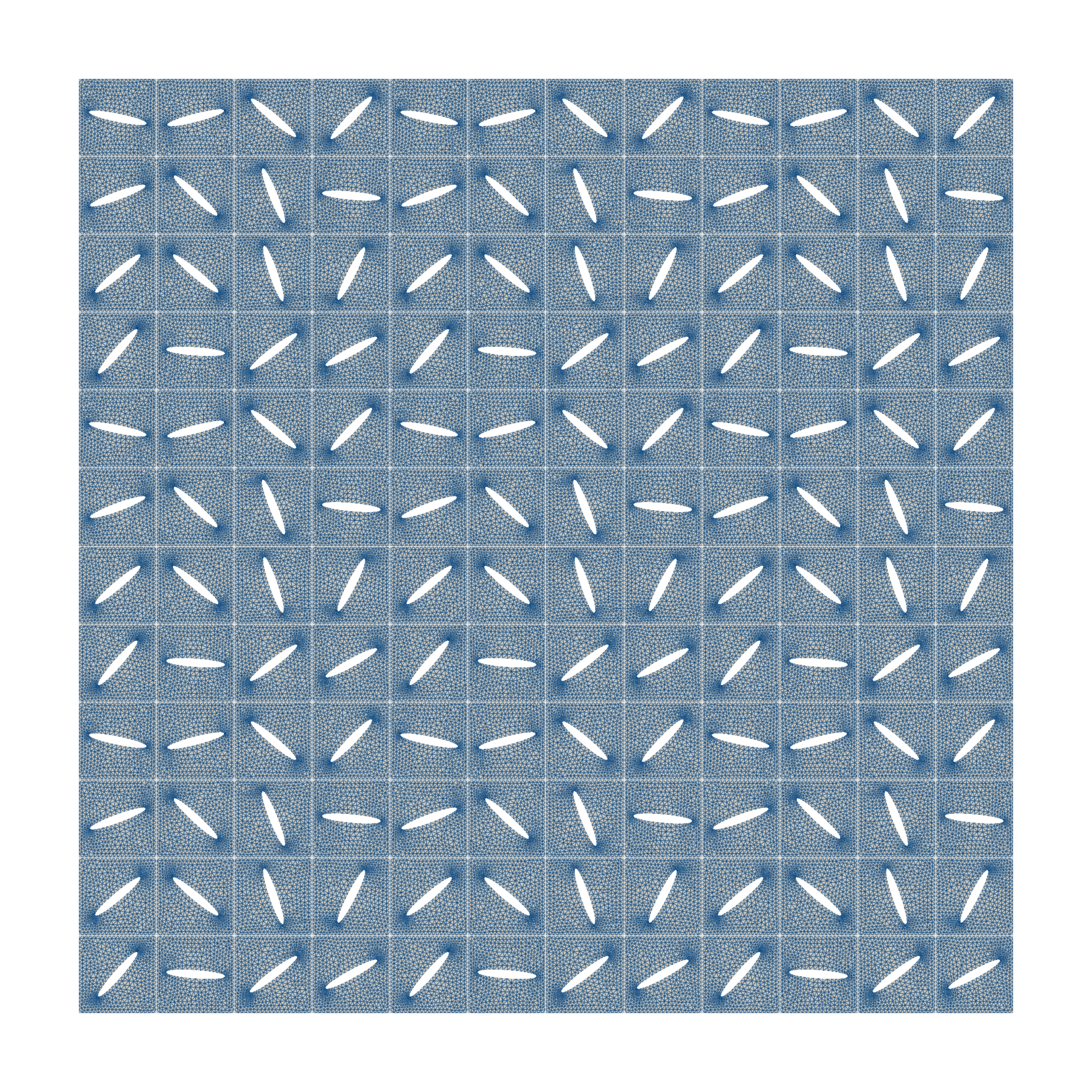}
\caption{\label{fig:comp_model} 
Skin grafting model with a random configuration of cuts. The 
finite element mesh consists of roughly 165,000 finite elements.}
\end{figure}

\begin{figure}[hbt]
	\centering %
	\begin{minipage}{0.4\linewidth} 
		\centering
		\includegraphics[width=\linewidth]{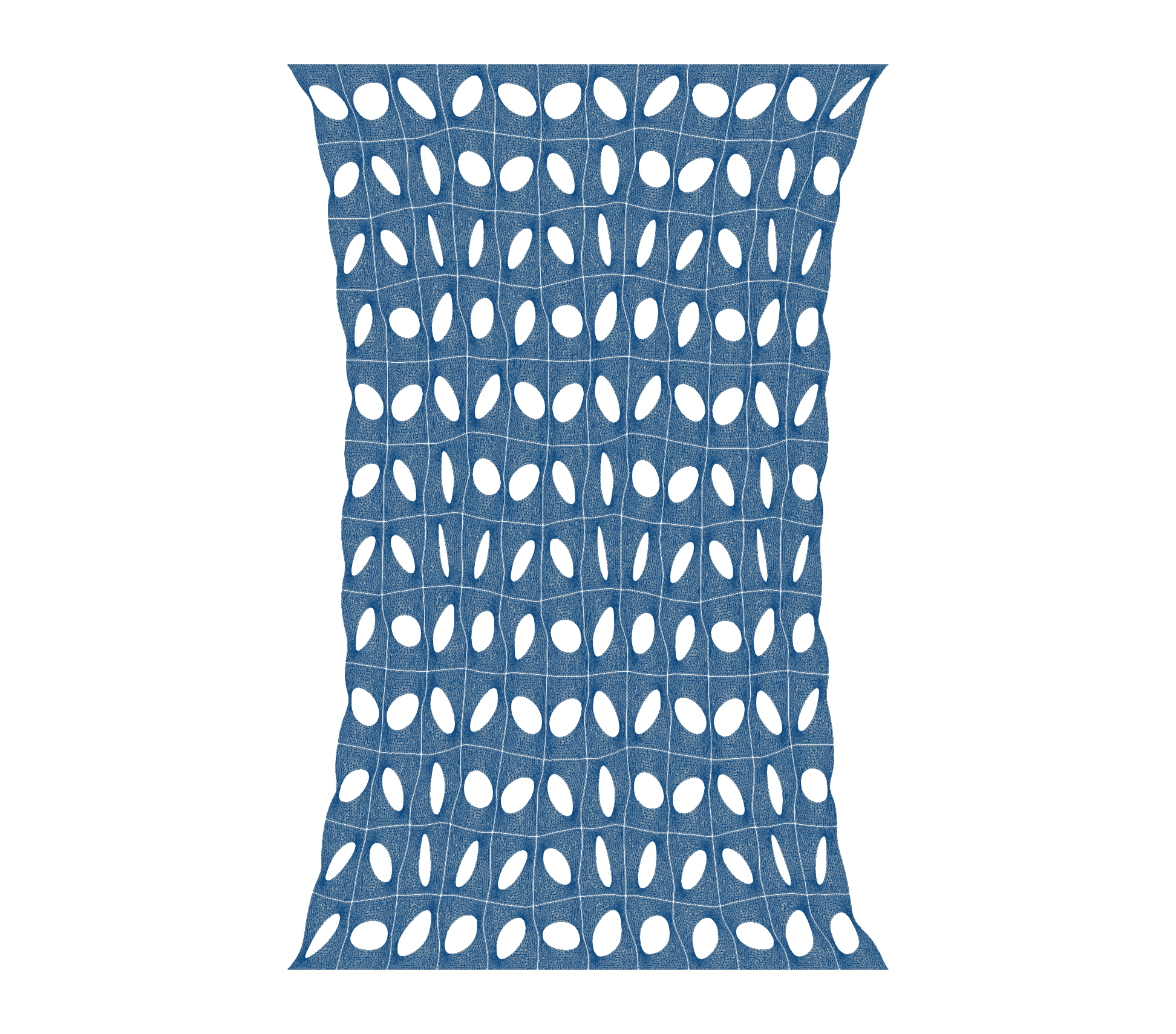} (b)
	\end{minipage}
	\begin{minipage}{0.4\linewidth} 
		\centering
		\includegraphics[width=\linewidth]{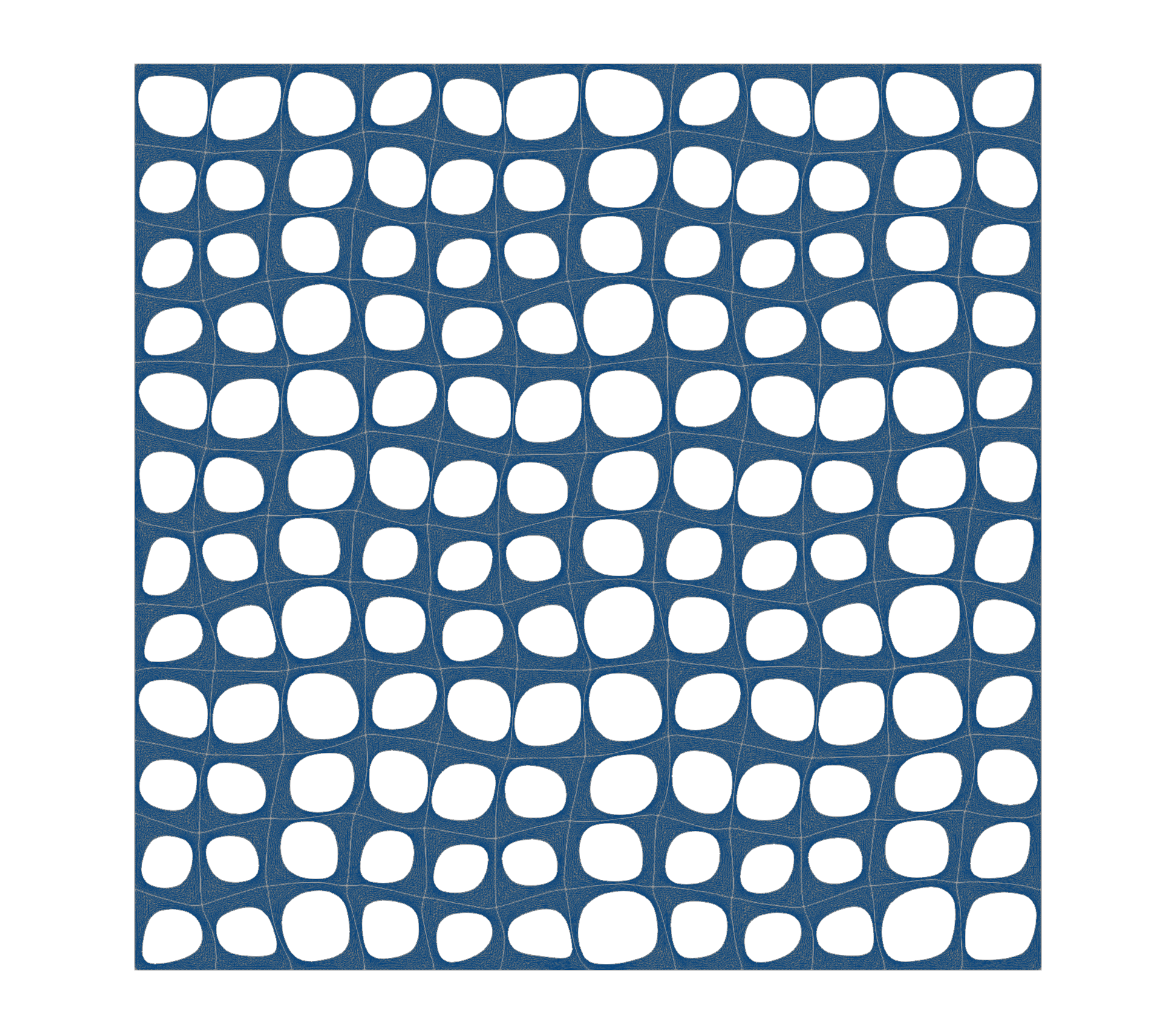} (c)
	\end{minipage}
	\caption{\label{fig:stretched} 
	Stretched skin for the random configuration of cuts: 
	(a) single axis stretching, (b) bi-axial stretching.
	} 
\end{figure}

For solving the equations of linear elasticity on the respective
layout, we apply the finite element method as provided by
the finite element solver FreeFem++, see \cite{hecht2012new}. 
The material parameters are set as proposed in the review article 
\cite{kalra2016mechanical}: the Young modulus is \(E=50\) MPa 
and the Poisson is ratio \(\nu=0.48\). Since the finite element mesh 
is automatically adapted towards the current geometry during the 
optimization process, it is sufficient to say that the finite element 
mesh size  varies from \(h_\text{min} = 6.5\cdot10^{-4}\) to 
\(h_\text{min} = 7.5\cdot10^{-3}\), which leads to a number of about 
165,000 finite elements. An illustration of the finite element 
mesh in case of a random configuration can be found in 
Figure~\ref{fig:comp_model}.

We consider two stress models. The first one is stretching in one 
axial direction, i.e.,\(\GD\) are two opposite boundaries of the square 
\(\Omega\), on which there is a Dirichlet condition, and the remaining 
boundaries \(\GN\) are free. The second one is the stretching in both 
axial directions, i.e., a Dirichlet boundary condition is prescribed at 
the whole boundary \(\partial\Omega = \GD\). In both cases, we set 
\(\bg=0.25\bn\) and \(\bf=\boldsymbol{0}\). Note that, with this setup, 
the solution of the equations of linear elasticity will admit singularities 
in the vertices of the square. A visualization of the stretched skin in
case of the random configuration from Figure \ref{fig:comp_model} 
can be found in Figure \ref{fig:stretched}.

\subsection{Numerical results: Stretching in one axial direction}
In the case of stretching in one axial direction, the gradient 
descent method provides satisfactory results. The initial 
configuration is the same for all experiments and the one 
presented in Figure~\ref{fig:comp_model}. A total of 100 
iterations is performed, and the outcomes obtained are 
presented in Figure \ref{fig:res_1ax}. Moreover, in all our 
subsequent experiments, we have chosen \(p=5\) when 
minimizing the \(L^p\)-norm of the von Mises stress.

When minimizing the compliance \(\C(\Oa)\), the optimal
design is a horizontal arrangement of the cuts. More 
precisely, all cuts are perpendicular to the stretch direction, 
which is a reasonable configuration, see the first row in the 
Figure \ref{fig:res_1ax}. It allows for the largest possible 
opening of the cuts, which makes the stretching process 
easier. The deformation in this situation is characterized 
by a minimum amount of work. However, the skin in the 
area between the cuts is quite thin, which increases its
stress. Therefore, it is not surprising that in order to 
decrease the stress intensity, it is necessary to find an 
better balance between stretching easiness and maintaining 
sufficient thickness in the areas between the cuts. Thus, 
when minimizating the \(L^5\)-norm of the von Mises stress 
\(\M(\Oa)\), the optimal design is a zigzag configuration,
compare the second row in the Figure~\ref{fig:res_1ax}.

\begin{figure}[hbt]
	\centering %
	\begin{minipage}{0.32\linewidth} 
		\centering
		\includegraphics[width=\linewidth]{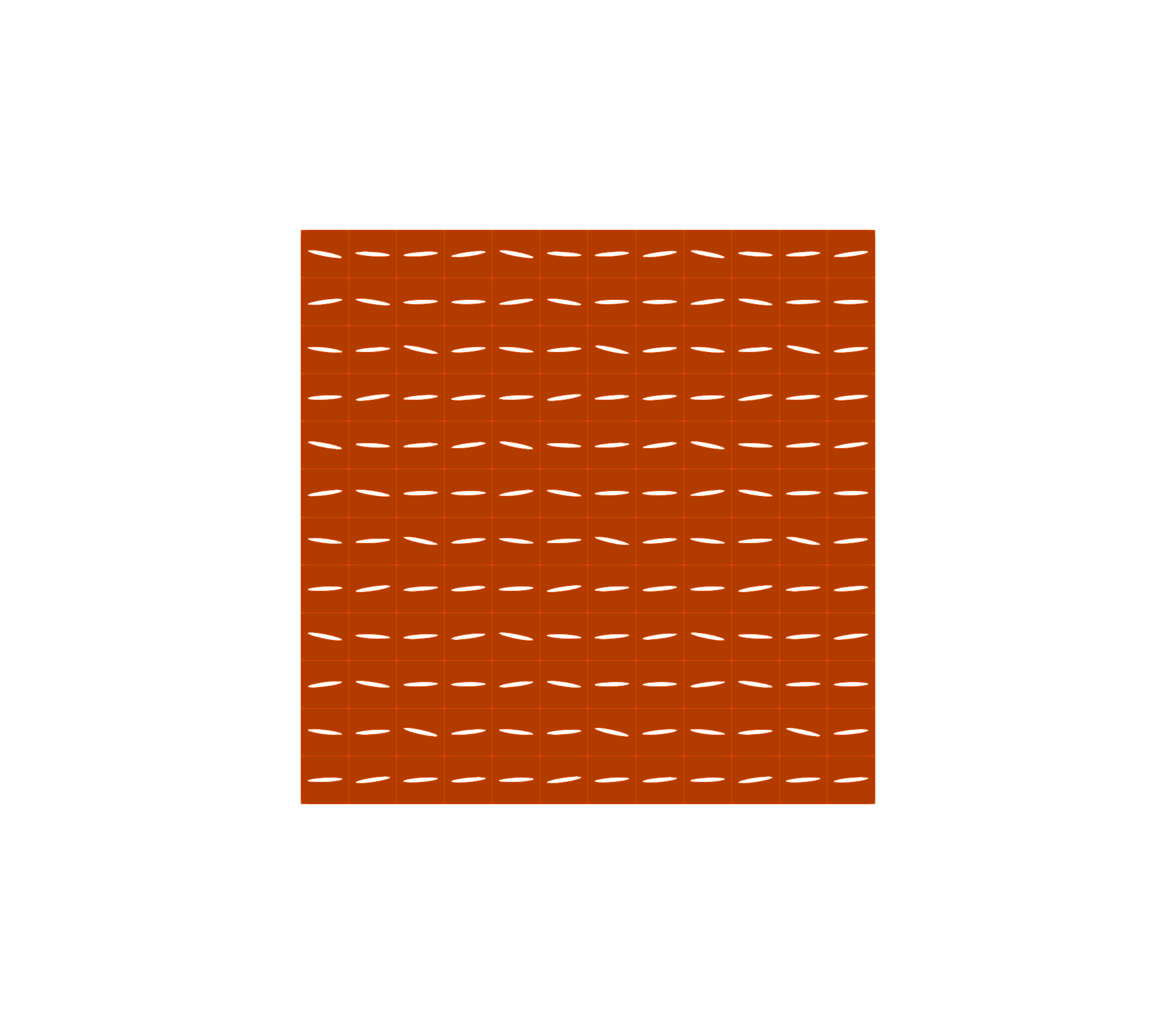} 
	\end{minipage}
	\hfill 
	\begin{minipage}{0.32\linewidth} 
		\centering
		\includegraphics[width=\linewidth]{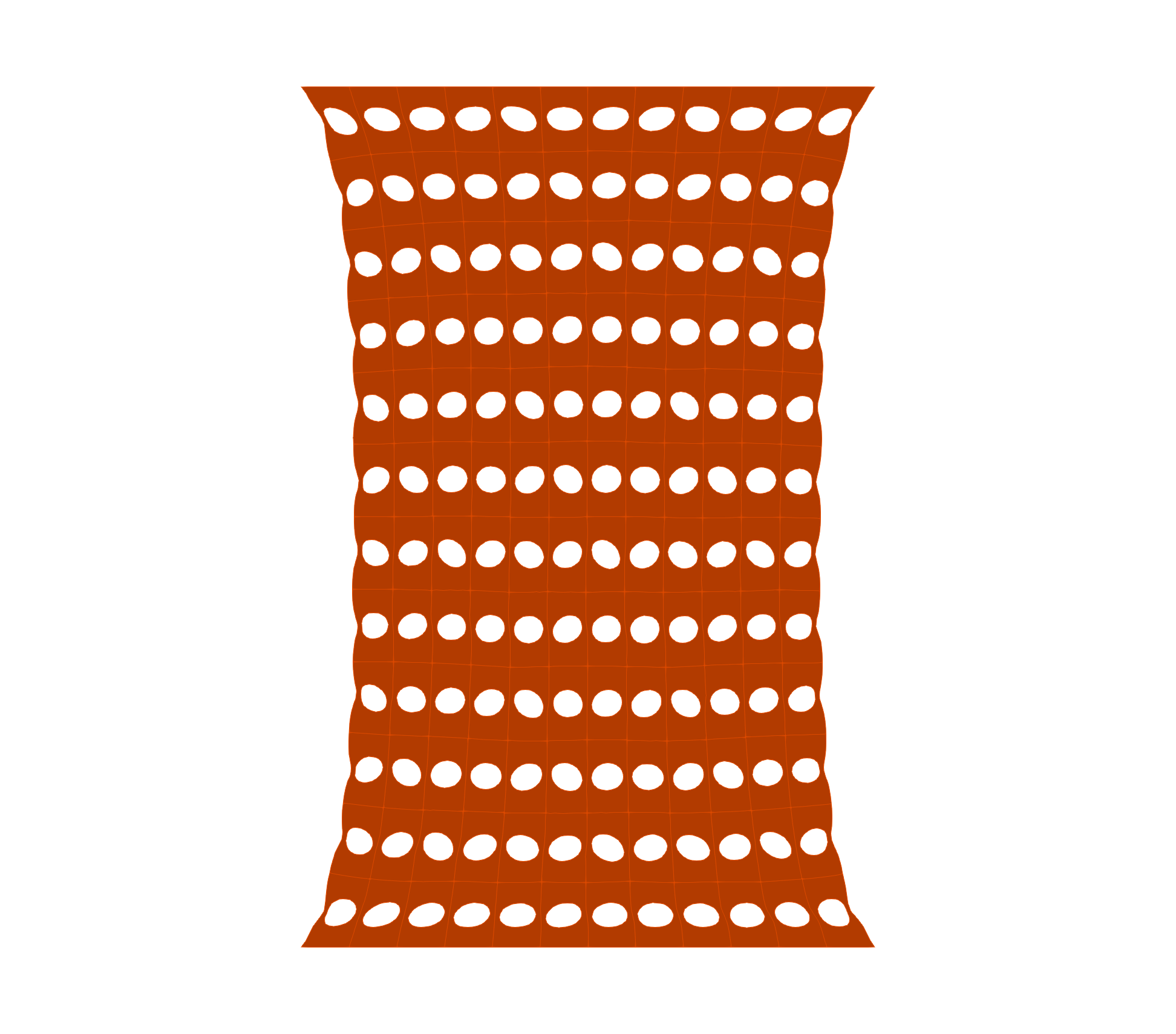} 
	\end{minipage}
	\hfill 
	\begin{minipage}{0.33\linewidth} 
		\centering
		\includegraphics[width=\linewidth]{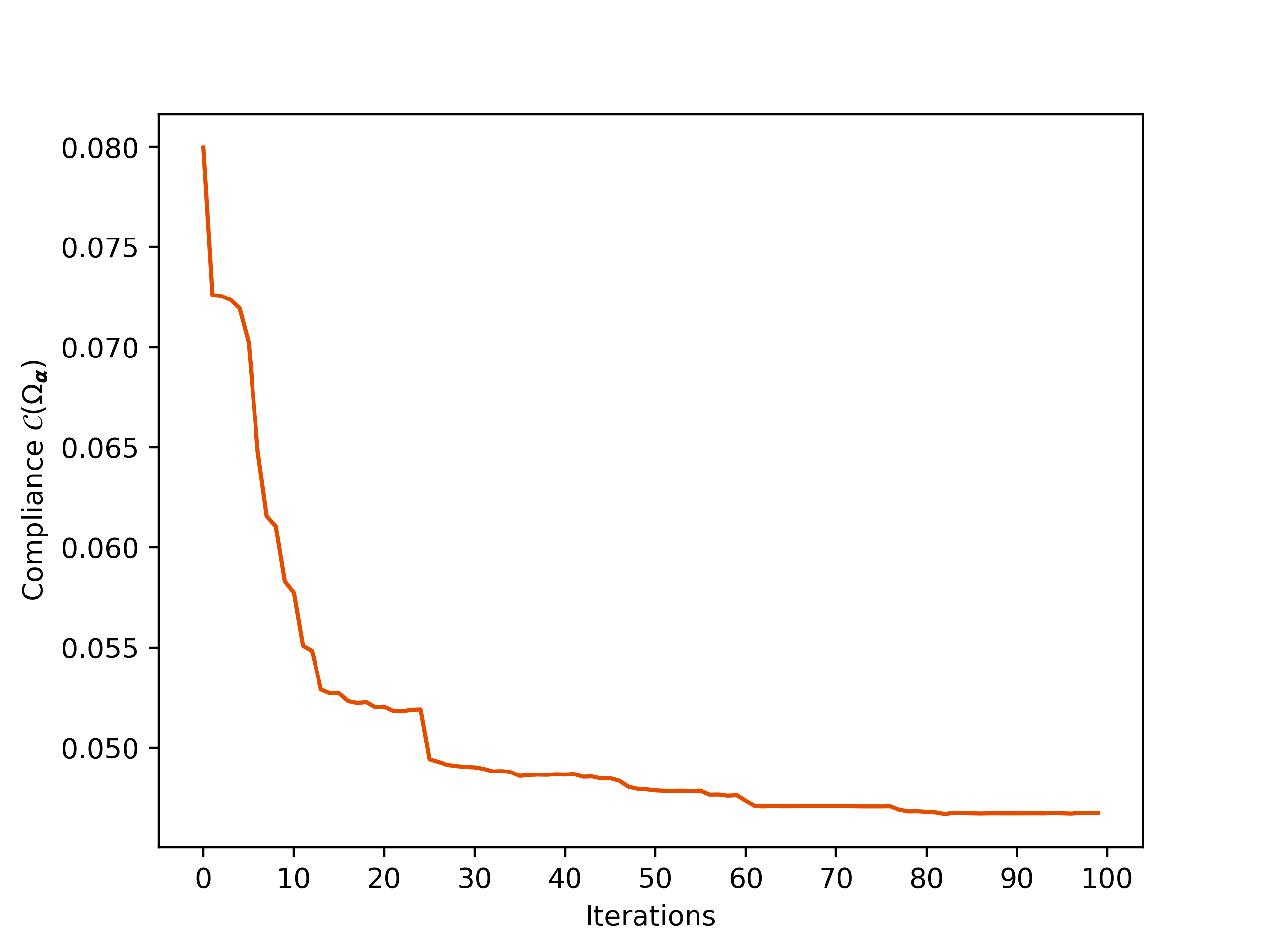} 
	\end{minipage}
	\vfill
	\begin{minipage}{0.32\linewidth} 
	\centering
	\includegraphics[width=\linewidth]{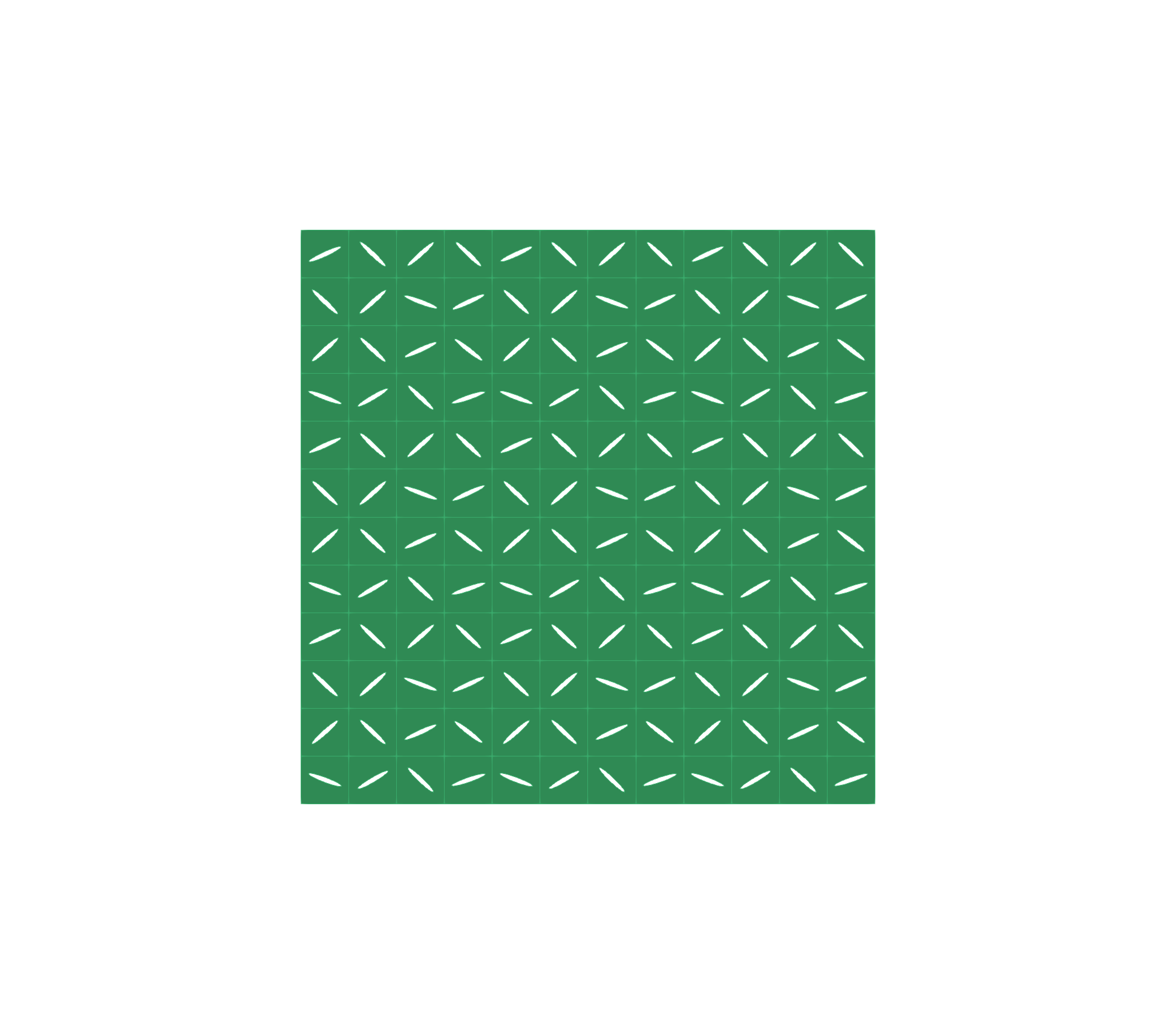} 
	\end{minipage}
	\hfill 
	\begin{minipage}{0.32\linewidth} 
		\centering
		\includegraphics[width=\linewidth]{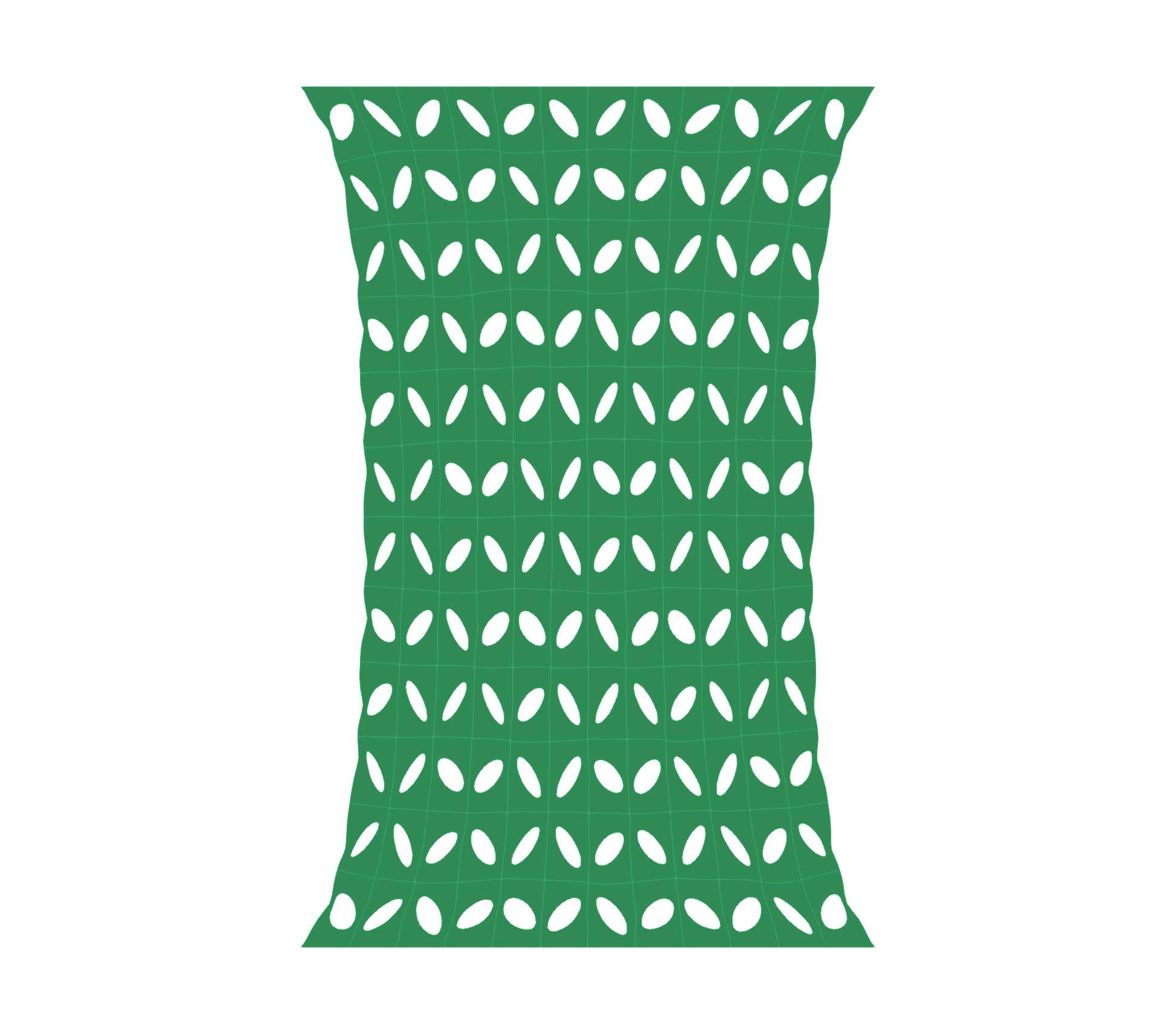} 
	\end{minipage}
	\hfill 
	\begin{minipage}{0.33\linewidth} 
		\centering
		\includegraphics[width=\linewidth]{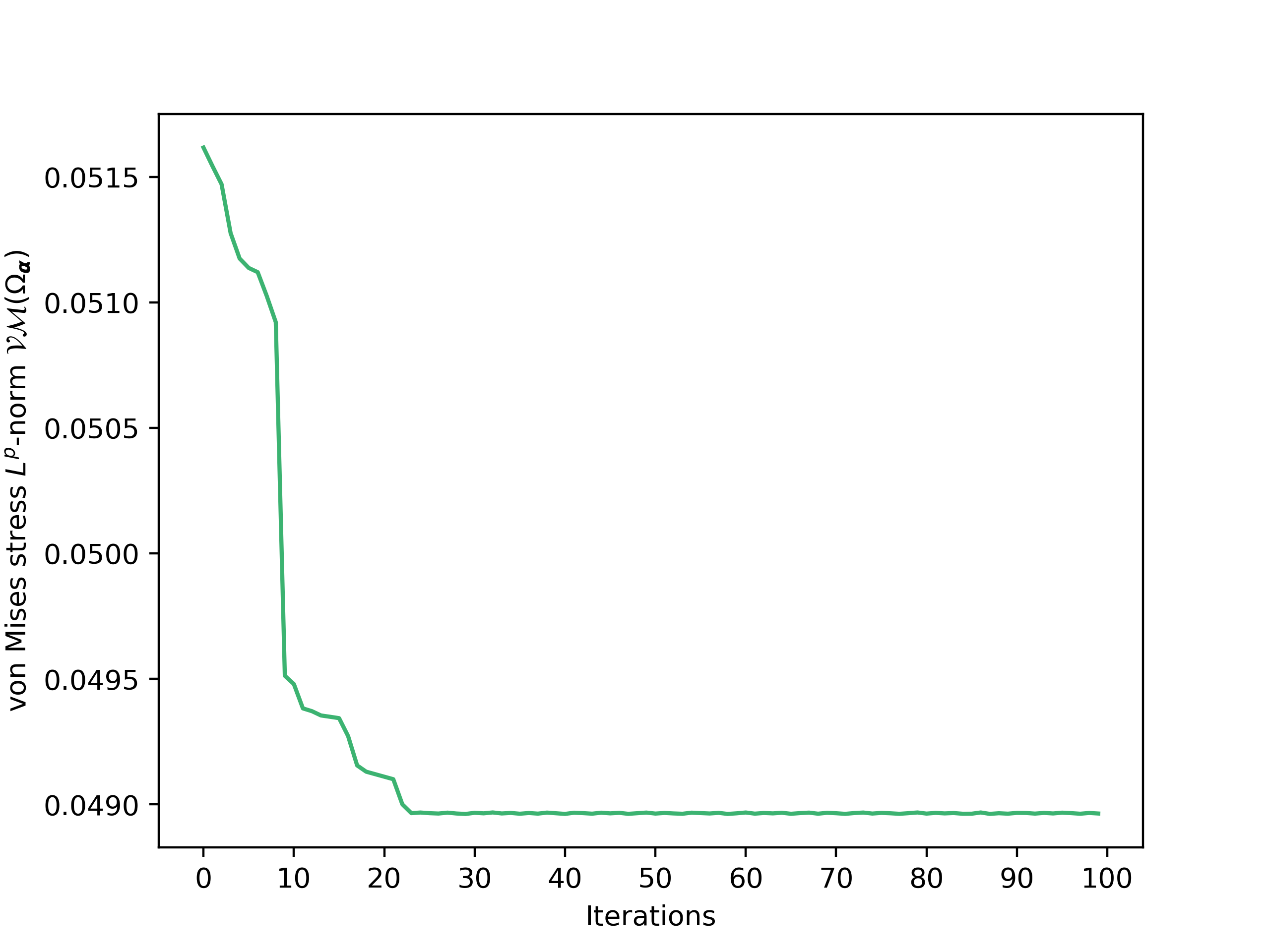} 
	\end{minipage}
		\vfill
	\begin{minipage}{0.32\linewidth} 
		\centering
		\includegraphics[width=\linewidth]{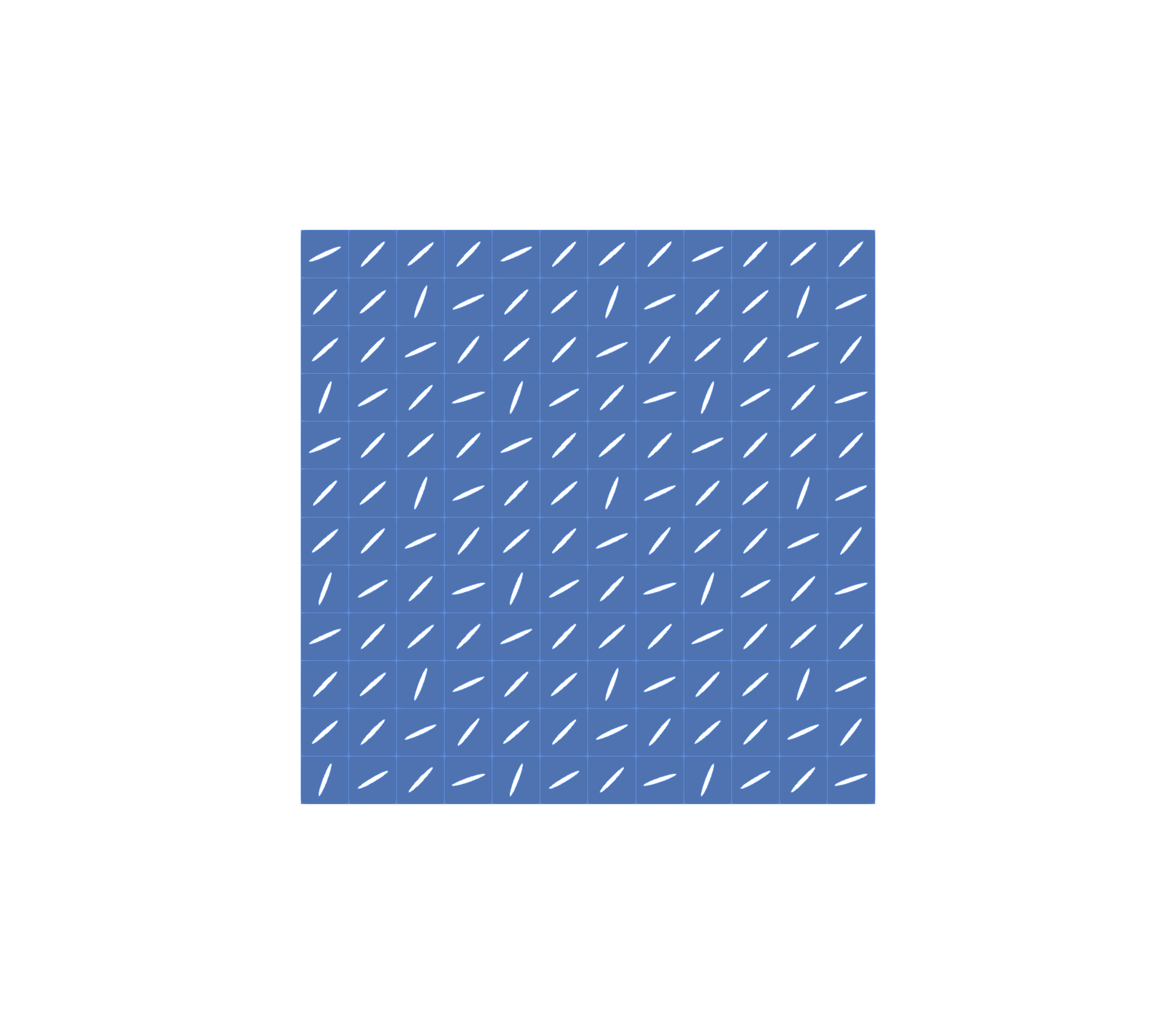} 
	\end{minipage}
	\hfill 
	\begin{minipage}{0.32\linewidth} 
		\centering
		\includegraphics[width=\linewidth]{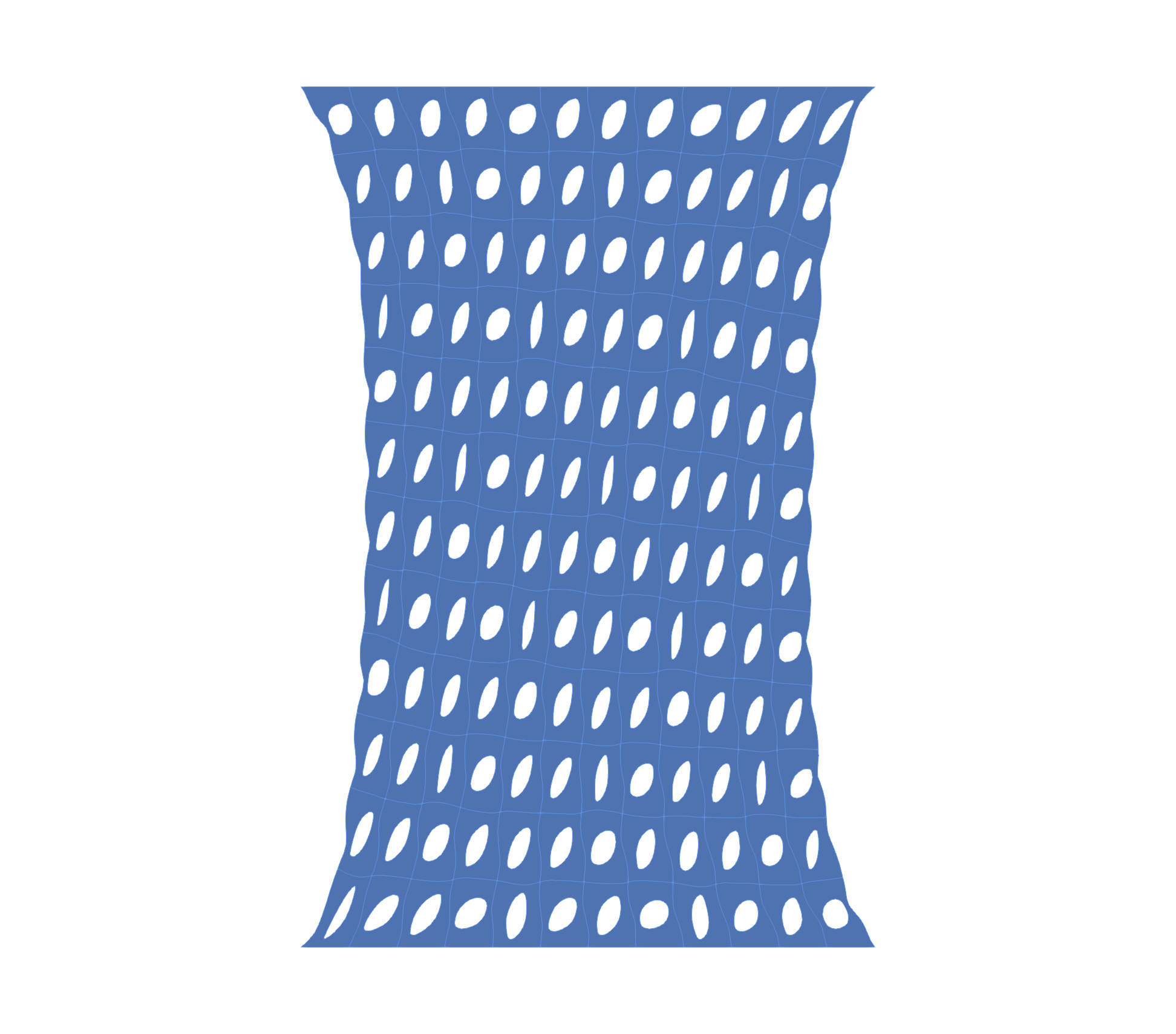} 
	\end{minipage}
	\hfill 
	\begin{minipage}{0.33\linewidth} 
		\centering
		\includegraphics[width=\linewidth]{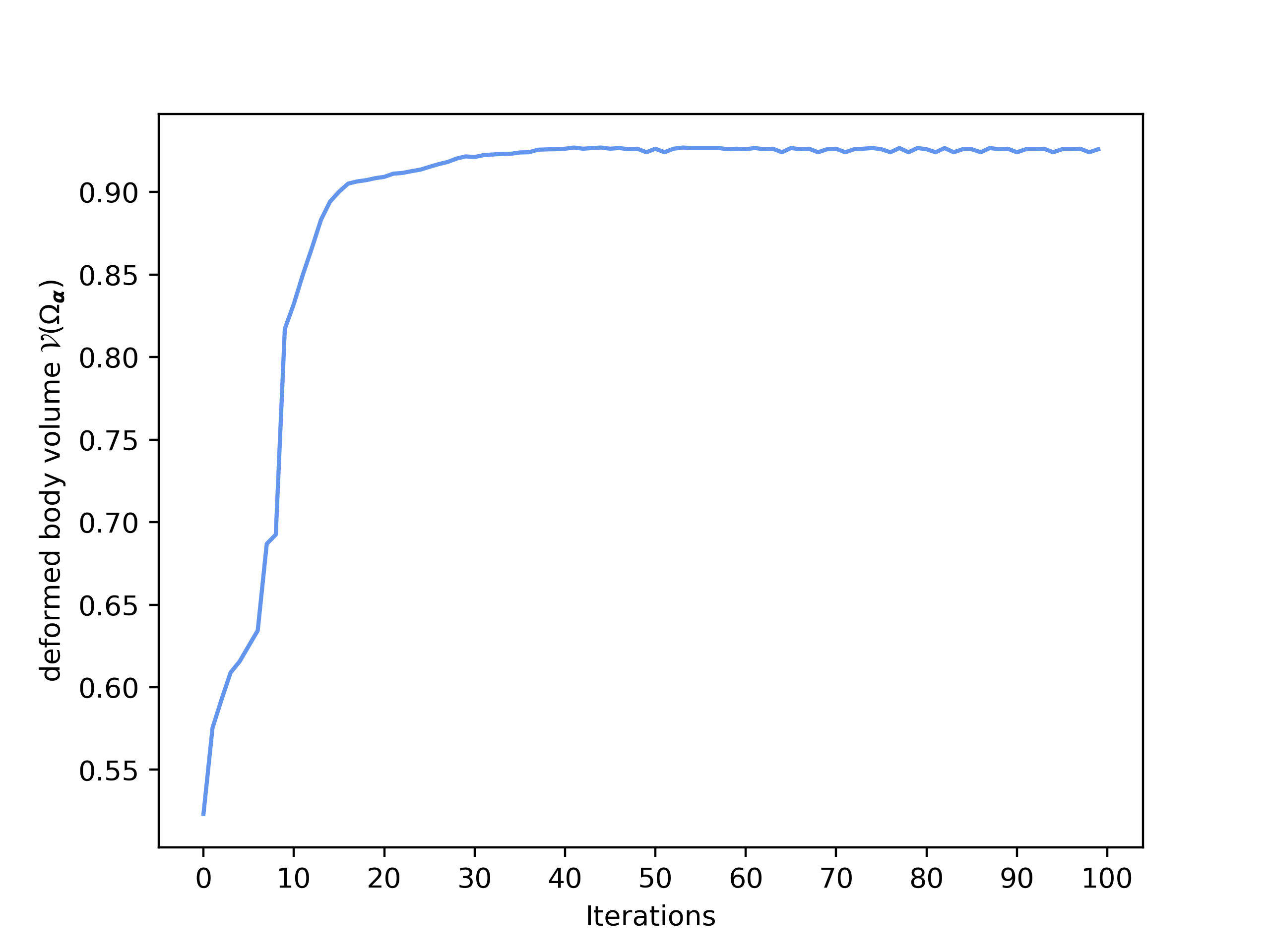} 
	\end{minipage}
	\caption{
		Stretching in one axial direction. Final cut configuration 
		(left), deformed skin (middle), and convergence history (right): 
		first row -- compliance \(\C(\Oa)\), 
		second row -- \(L^5\)-norm of the von Mises stress \(\M(\Oa)\), 
		third row -- area of the deformed body \(\A(\Oa)\).
	} 
	\label{fig:res_1ax} 
\end{figure}

\begin{figure}[hbt]
	\centering %
	\begin{minipage}{0.32\linewidth} 
		\centering
		\includegraphics[width=\linewidth]{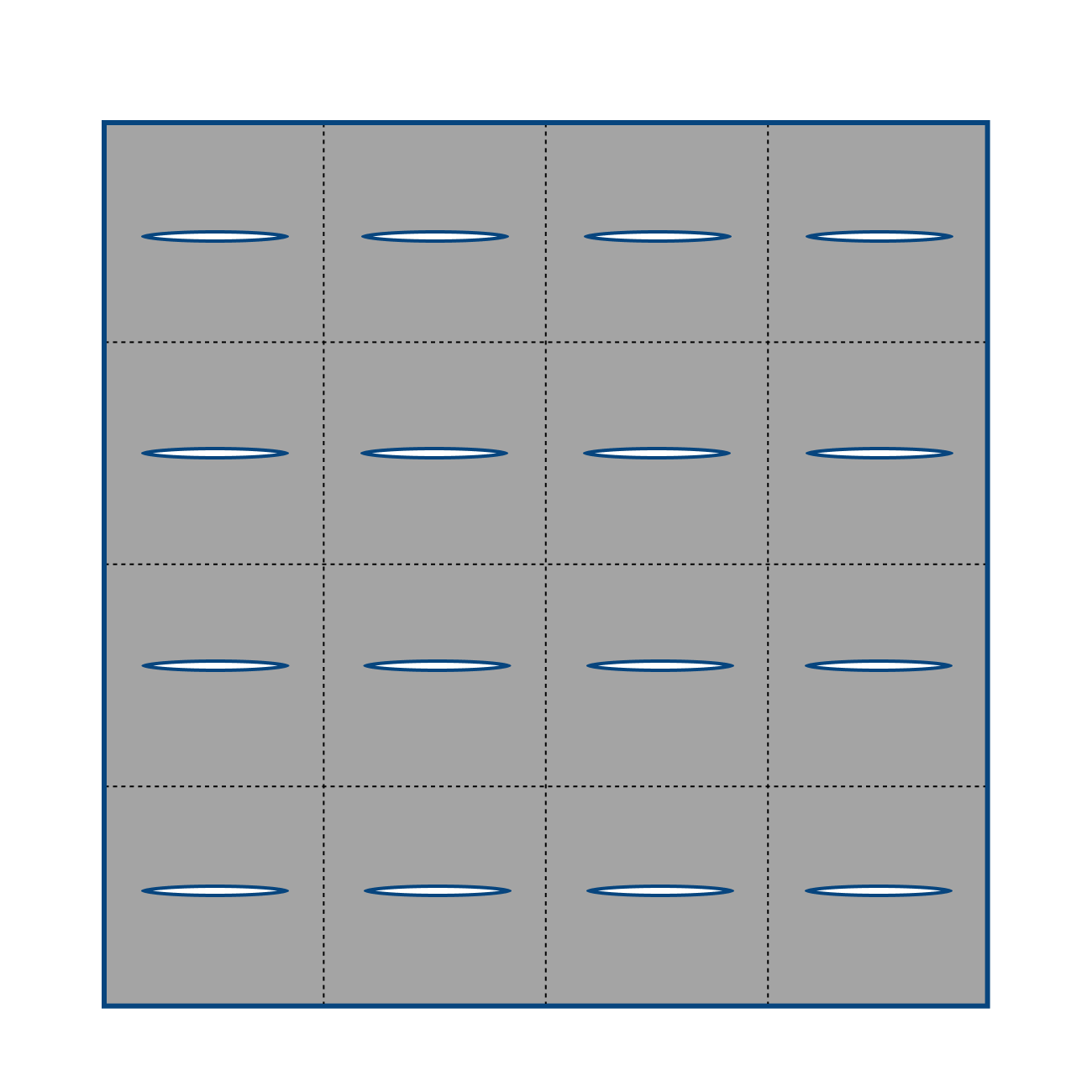} Horizontal lines
	\end{minipage}
	\hfill 
	\begin{minipage}{0.32\linewidth} 
		\centering
		\includegraphics[width=\linewidth]{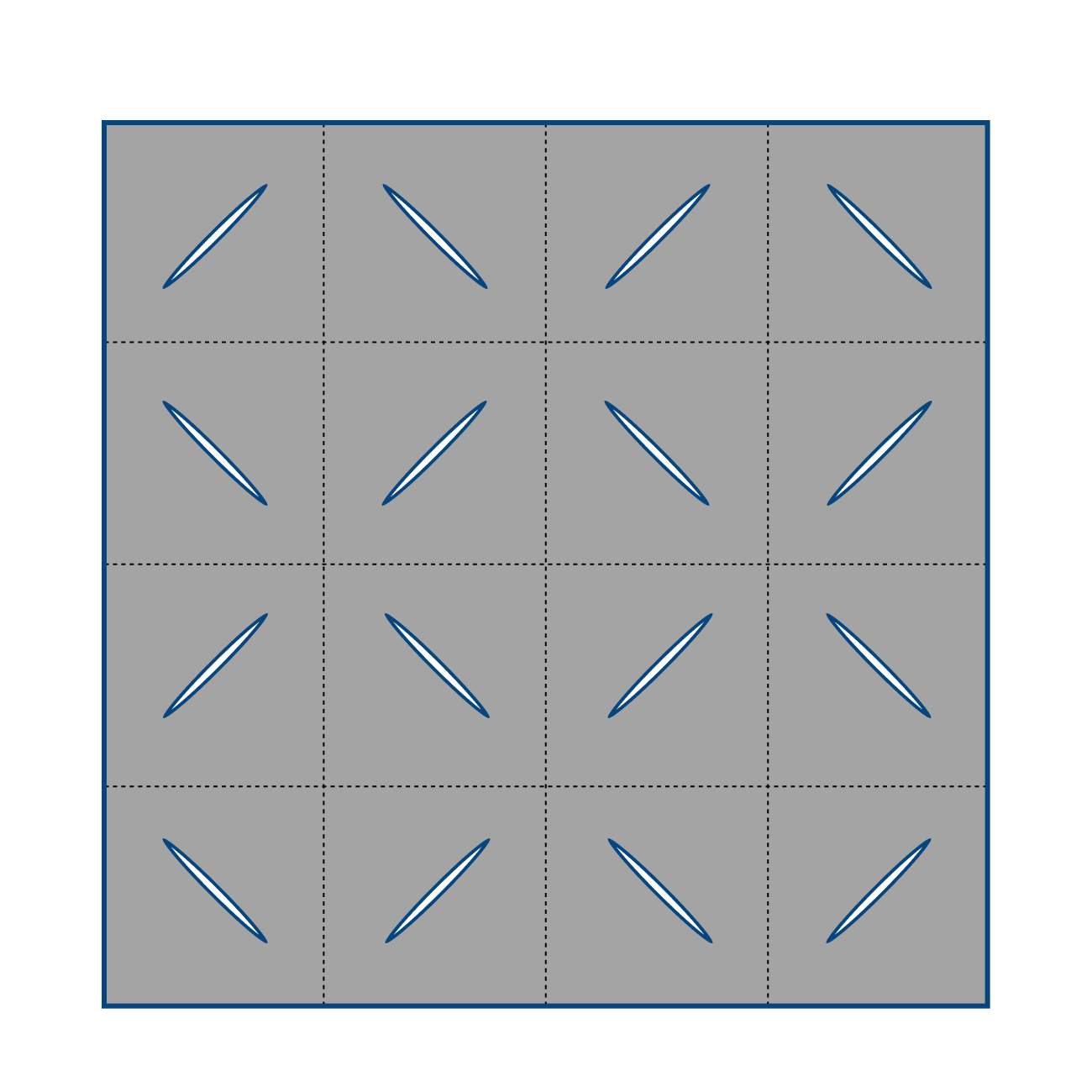} Zigzag I
	\end{minipage}
	\hfill 
	\begin{minipage}{0.32\linewidth} 
		\centering
		\includegraphics[width=\linewidth]{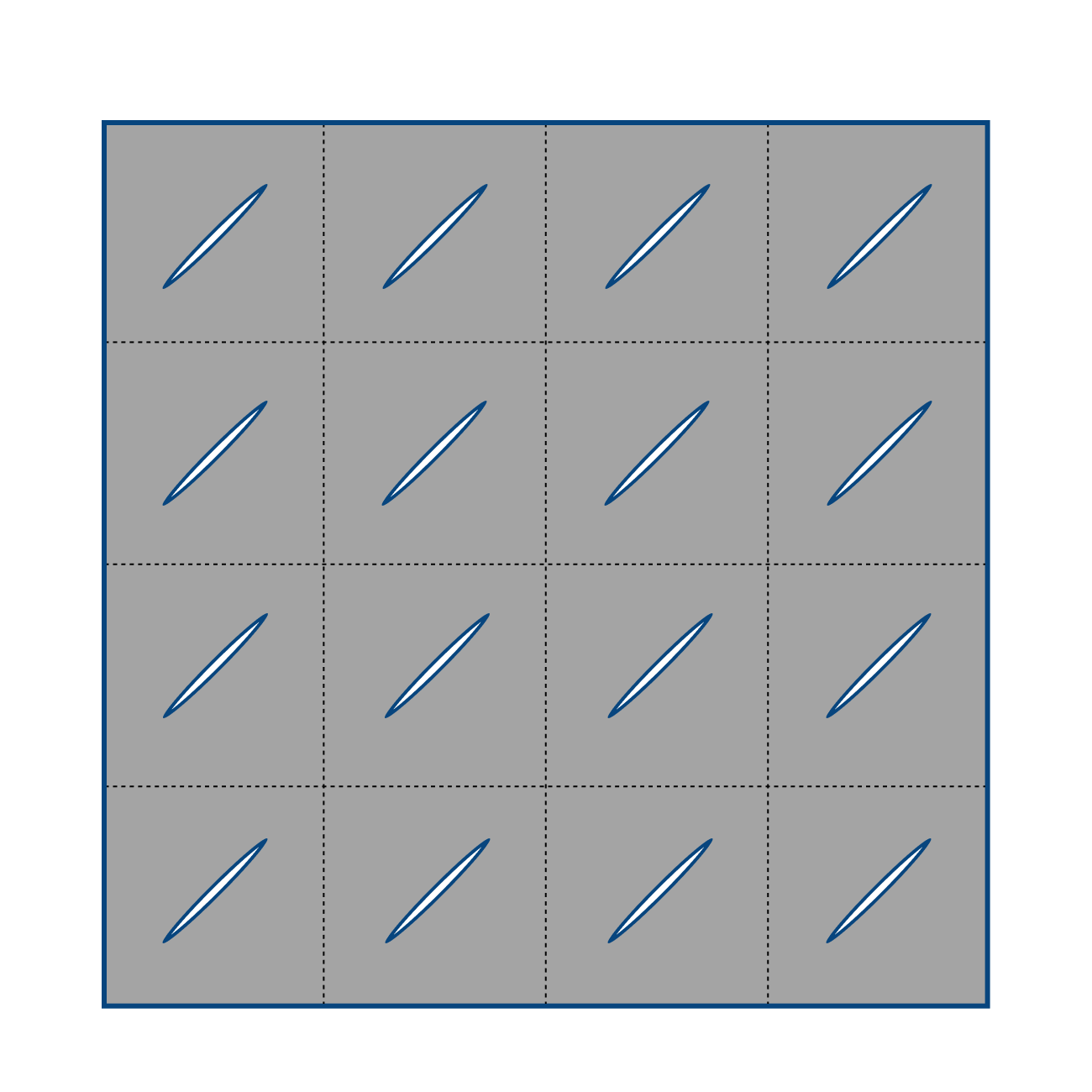} Diagonal lines
	\end{minipage}
	\caption{\label{fig:configsI}Illustration of the optimal designs
	in case of stretching the skin in one axial direction.}
\end{figure}

Finally, in the case of the maximization of the area of 
the stretched skin, i.e., when considering the shape functional 
\(\A(\Oa)\), we on the one hand maximize the area of the 
entire deformed body \(\A(\Omega_{\bua})\) and on the other
hand minimize the size of the deformed cuts \(\A((\oa)_{\bua})\). 
Therefore, the logical result is a diagonal configuration of the 
cut, see the third row in the Figure \ref{fig:res_1ax}. Here, 
the contraction of the body perpendicular to stretching 
and the deformation of the cuts are in balance. 

The optimal configurations found by the optimization process 
are summarized in Figure~\ref{fig:configsI} without computational 
noise in the outcomes caused by the simulation process.

\subsection{Numerical results: Stretching in two axial directions}
In the case of stretching the skin in both axial directions, 
solely using the gradient descent method is not successful
since we get stuck in one of the many local minima. However,
the combination of the gradient descent method with the 
genetic algorithm gives reasonable results. The initial 
population consisted of 150 random individuals as 
specified in Figure \ref{fig:res_1ax}. It took about 30 iterations 
for each functional to obtain a single elite individual. The results 
can be found in Figure \ref{fig:res_2ax}. The convergence 
histories represent the values of the objective function for 
the fittest individual in the population during the 
algorithm’s execution.

In the case of compliance minimization \(\C(\Oa)\), we obtained a 
configuration that switched between horizontal and vertical cuts, 
see the first row in the Figure \ref{fig:res_2ax}. This configuration 
is remarkable because it gives the skin auxetic properties. Unlike 
conventional structures, auxetic ones expand perpendicular to the 
stretching axes rather than shrinking. This results in a negative 
Poisson's ratio. With this configuration, the work required to deform 
the body is the minimized. We get also the same configuration when 
maximizing the area of the deformed body \(\A(\Oa)\).\footnote{By
taking a closer look to Figure~\ref{fig:res_2ax}, one readily infers
that the swap pattern is shifted, compare also Figure~\ref{fig:configsII}.
But for symmetry reasons, both patterns represent minimizers with
identical values in the respective objective functional.}

\begin{figure}[hbt]
	\centering %
	\begin{minipage}{0.32\linewidth} 
		\centering
		\includegraphics[width=\linewidth]{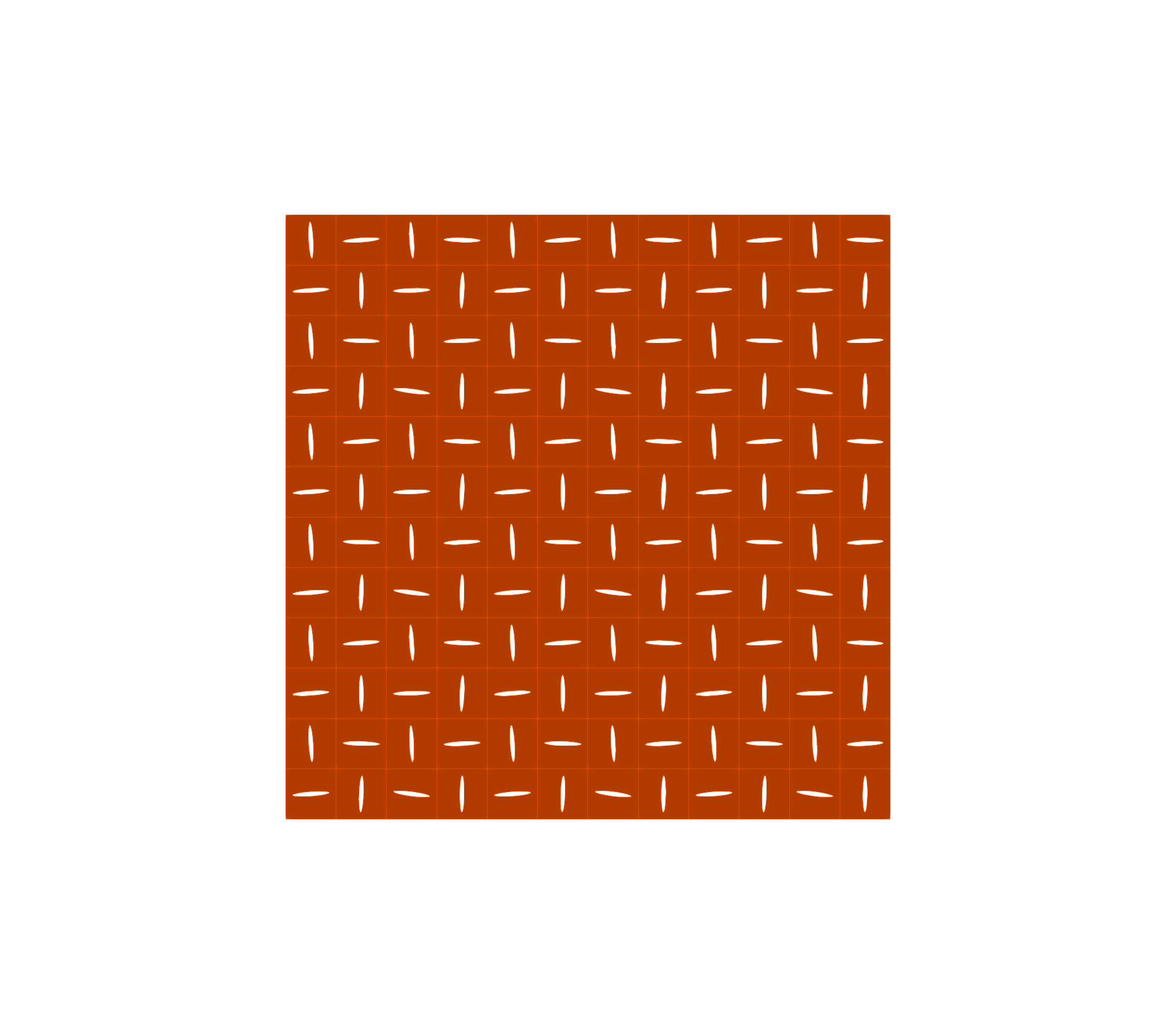} 
	\end{minipage}
	\hfill 
	\begin{minipage}{0.32\linewidth} 
		\centering
		\includegraphics[width=\linewidth]{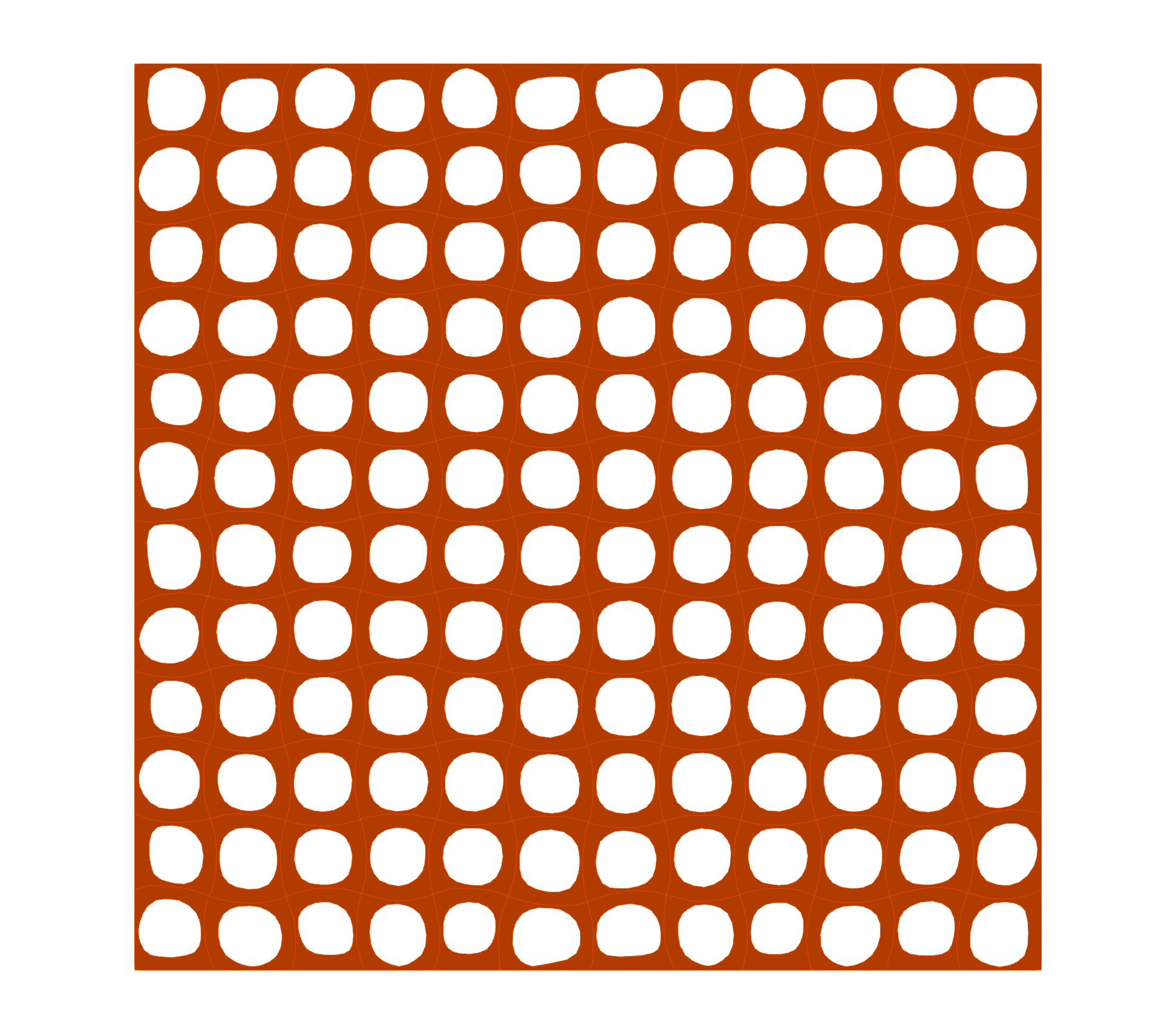} 
	\end{minipage}
	\hfill 
	\begin{minipage}{0.33\linewidth} 
		\centering
		\includegraphics[width=\linewidth]{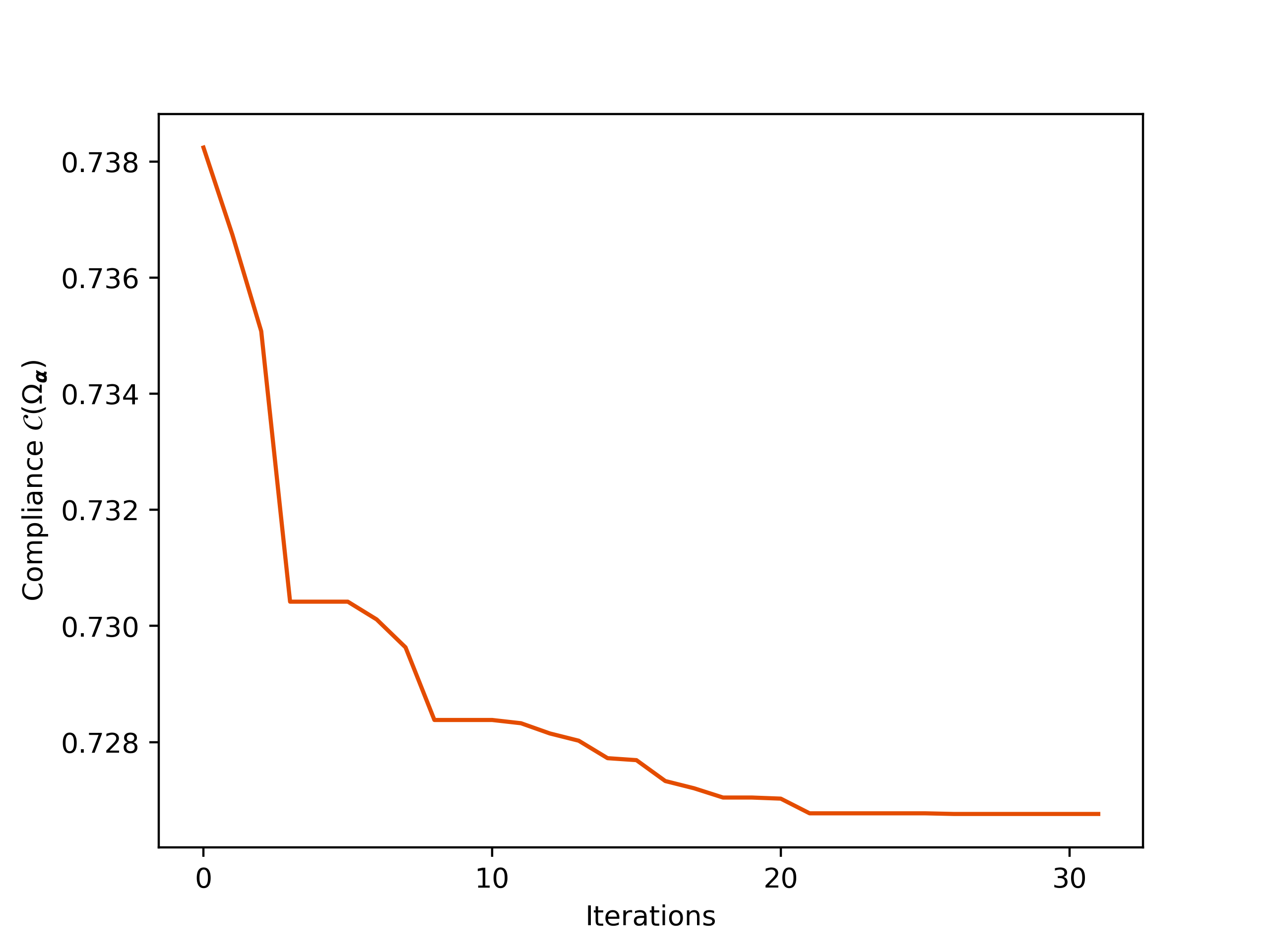} 
	\end{minipage}
	\vfill
	\begin{minipage}{0.32\linewidth} 
		\centering
		\includegraphics[width=\linewidth]{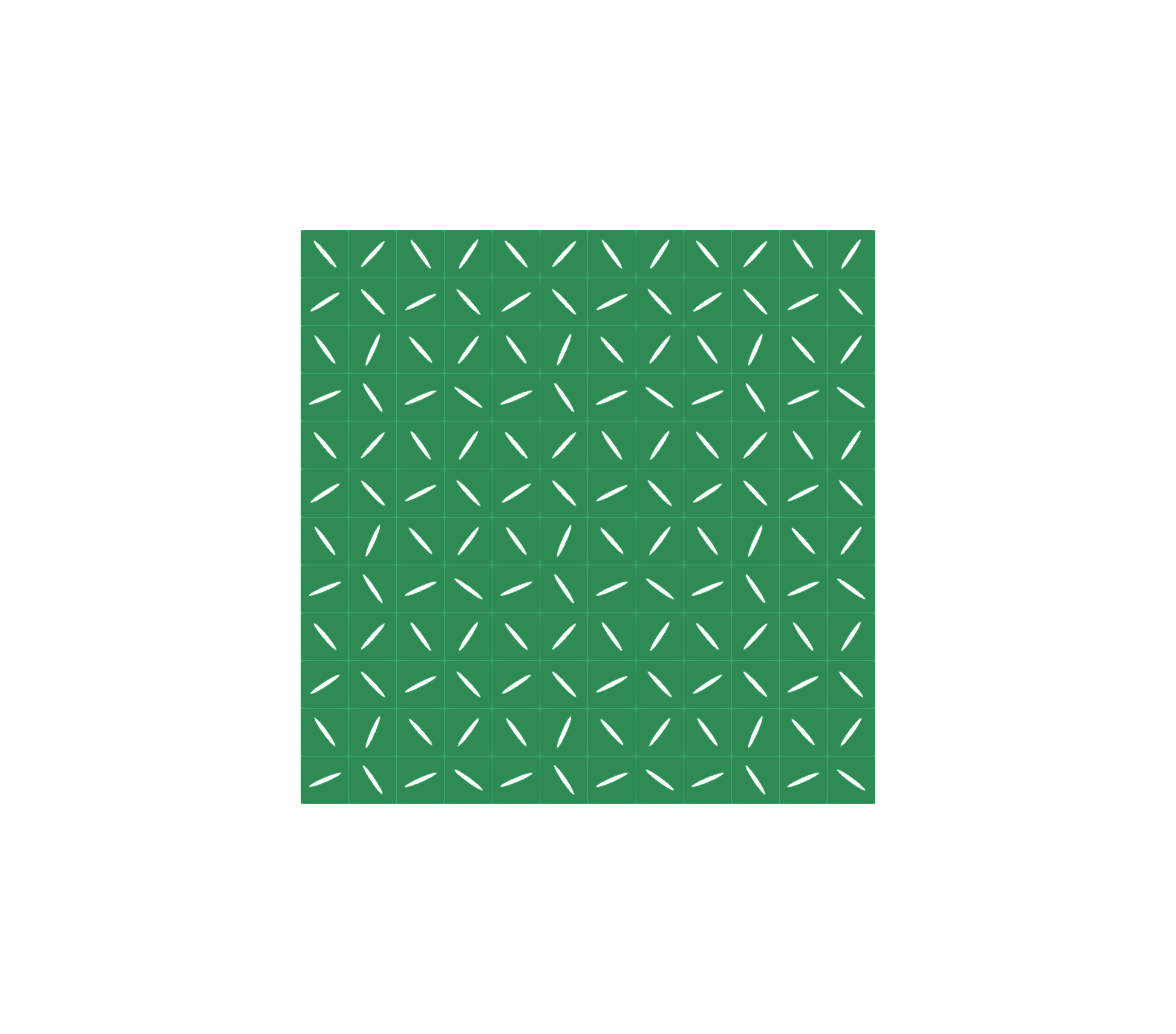} 
	\end{minipage}
	\hfill 
	\begin{minipage}{0.32\linewidth} 
		\centering
		\includegraphics[width=\linewidth]{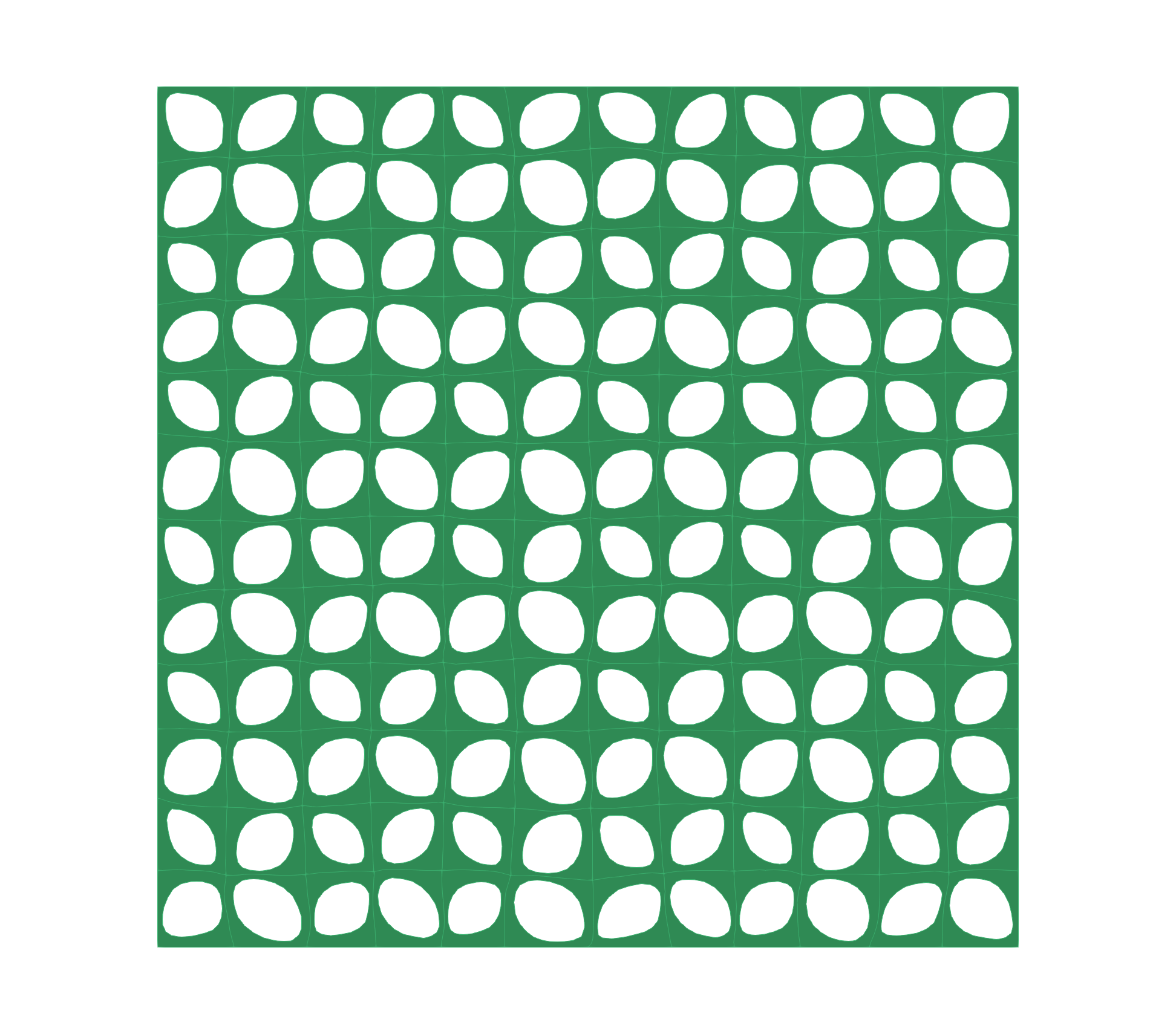} 
	\end{minipage}
	\hfill 
	\begin{minipage}{0.33\linewidth} 
		\centering
		\includegraphics[width=\linewidth]{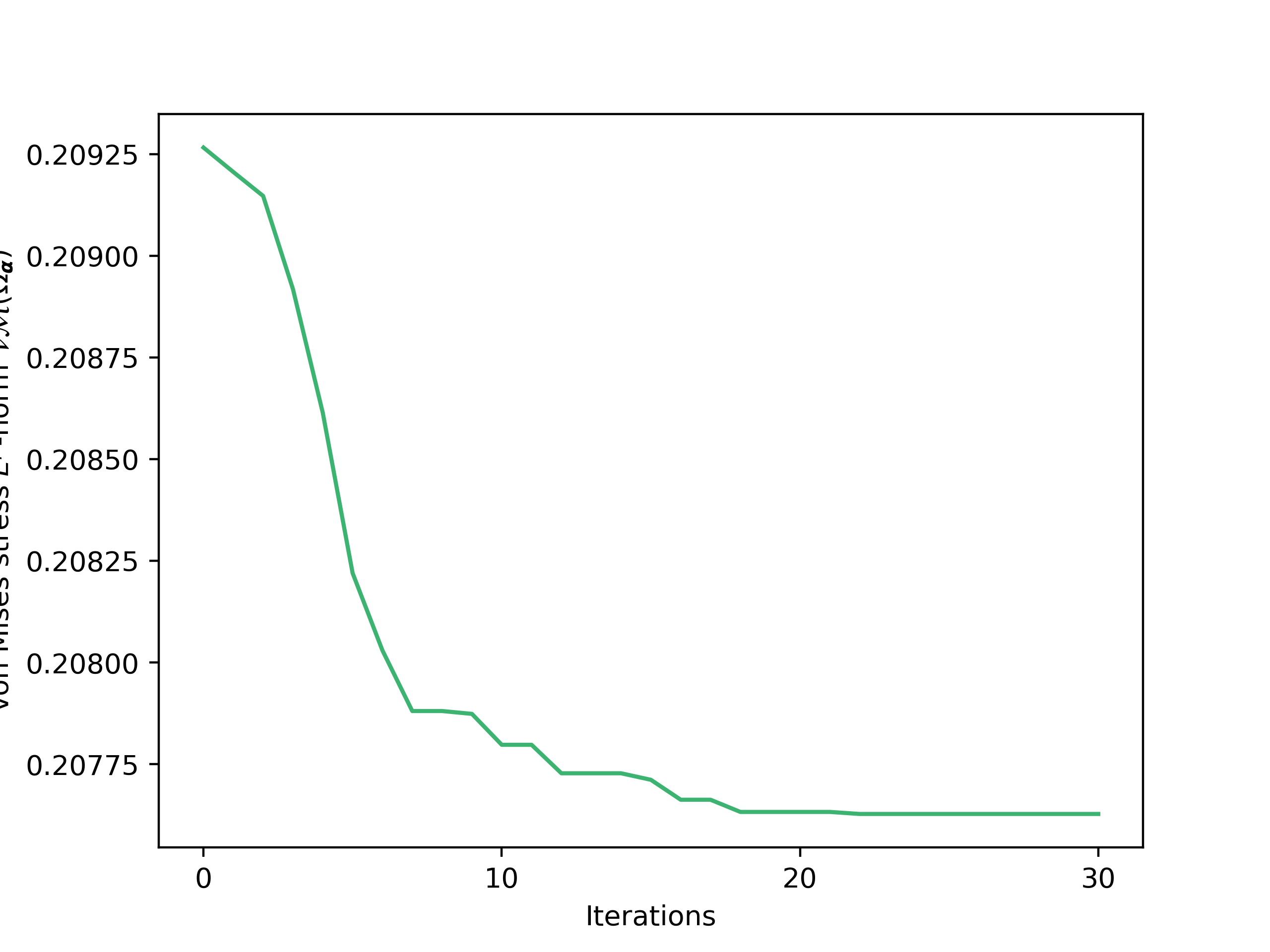} 
	\end{minipage}
	\vfill
	\begin{minipage}{0.32\linewidth} 
		\centering
		\includegraphics[width=\linewidth]{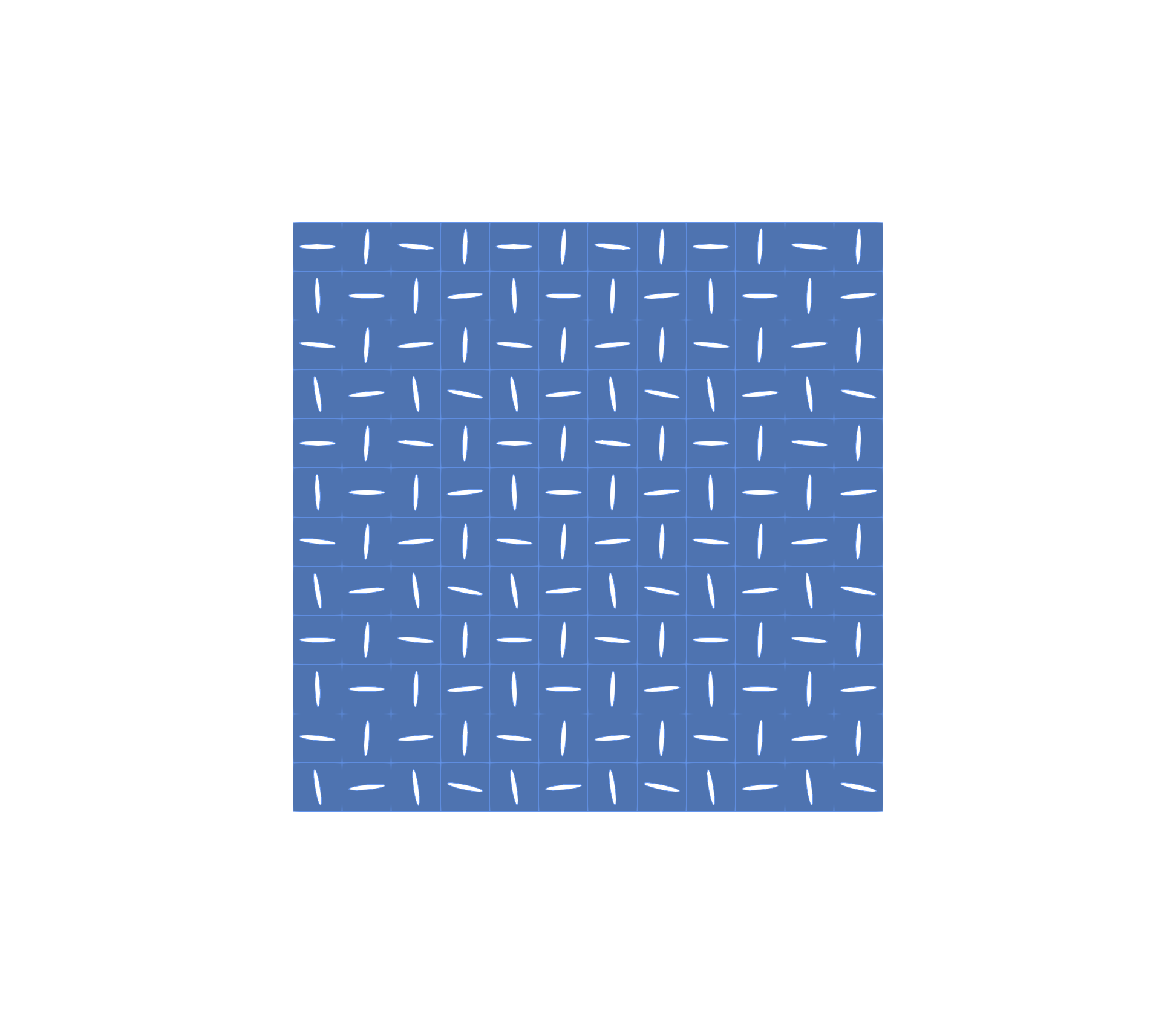} 
	\end{minipage}
	\hfill 
	\begin{minipage}{0.32\linewidth} 
		\centering
		\includegraphics[width=\linewidth]{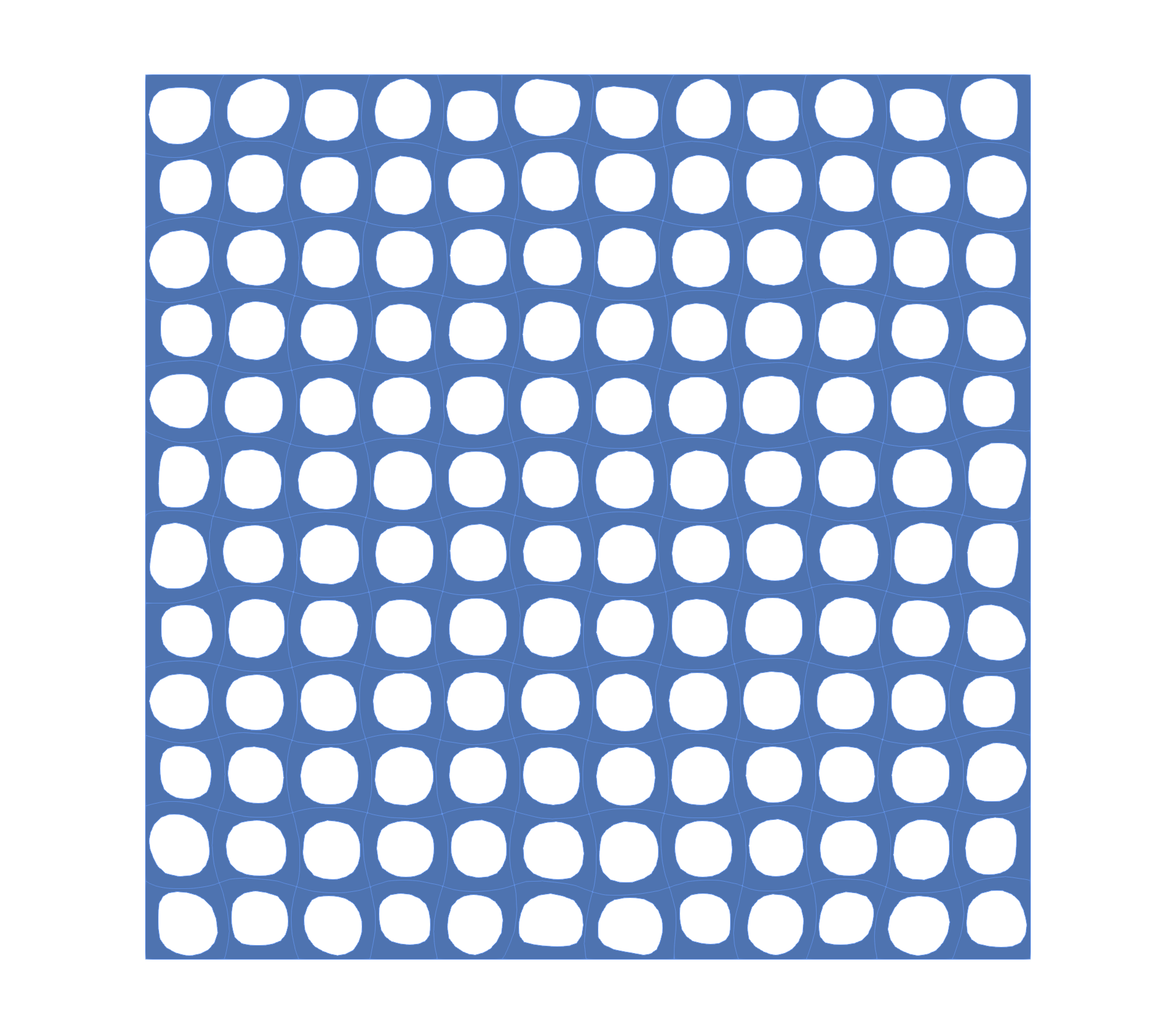} 
	\end{minipage}
	\hfill 
	\begin{minipage}{0.33\linewidth} 
		\centering
		\includegraphics[width=\linewidth]{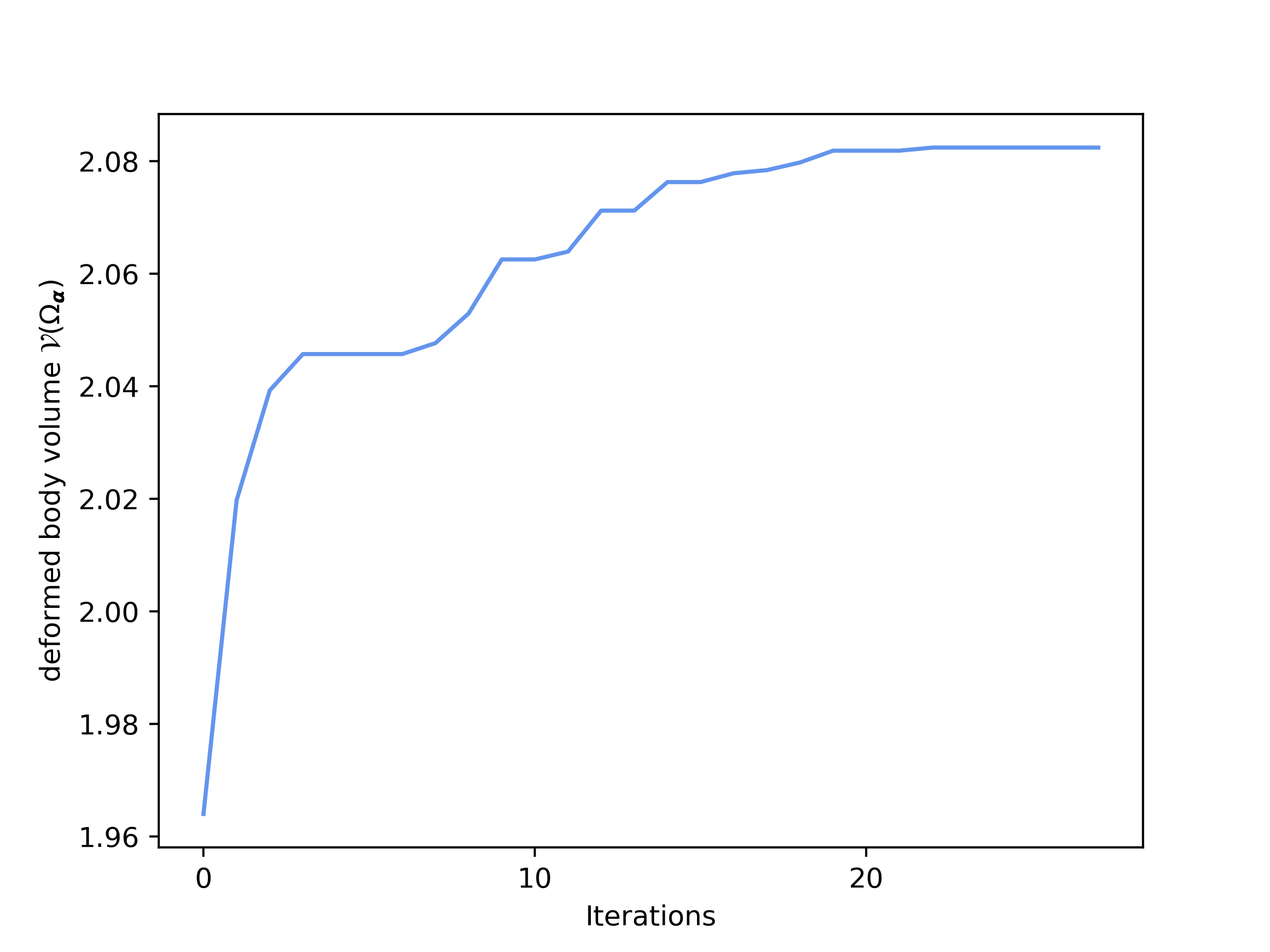} 
	\end{minipage}
	\caption{
		Stretching in two axial directions. Final cut configuration
		(left), deformed skin (middle) and convergence history(right):
		first row -- compliance \(\C(\Oa)\), 
		second row -- \(L^5\)-norm of the von Mises stress \(\M(\Oa)\), 
		third row -- area of the deformed body \(\A(\Oa)\).
	} 
	\label{fig:res_2ax} 
\end{figure}

\begin{figure}[hbt]
	\centering %
	\begin{minipage}{0.32\linewidth} 
		\centering
		\includegraphics[width=\linewidth]{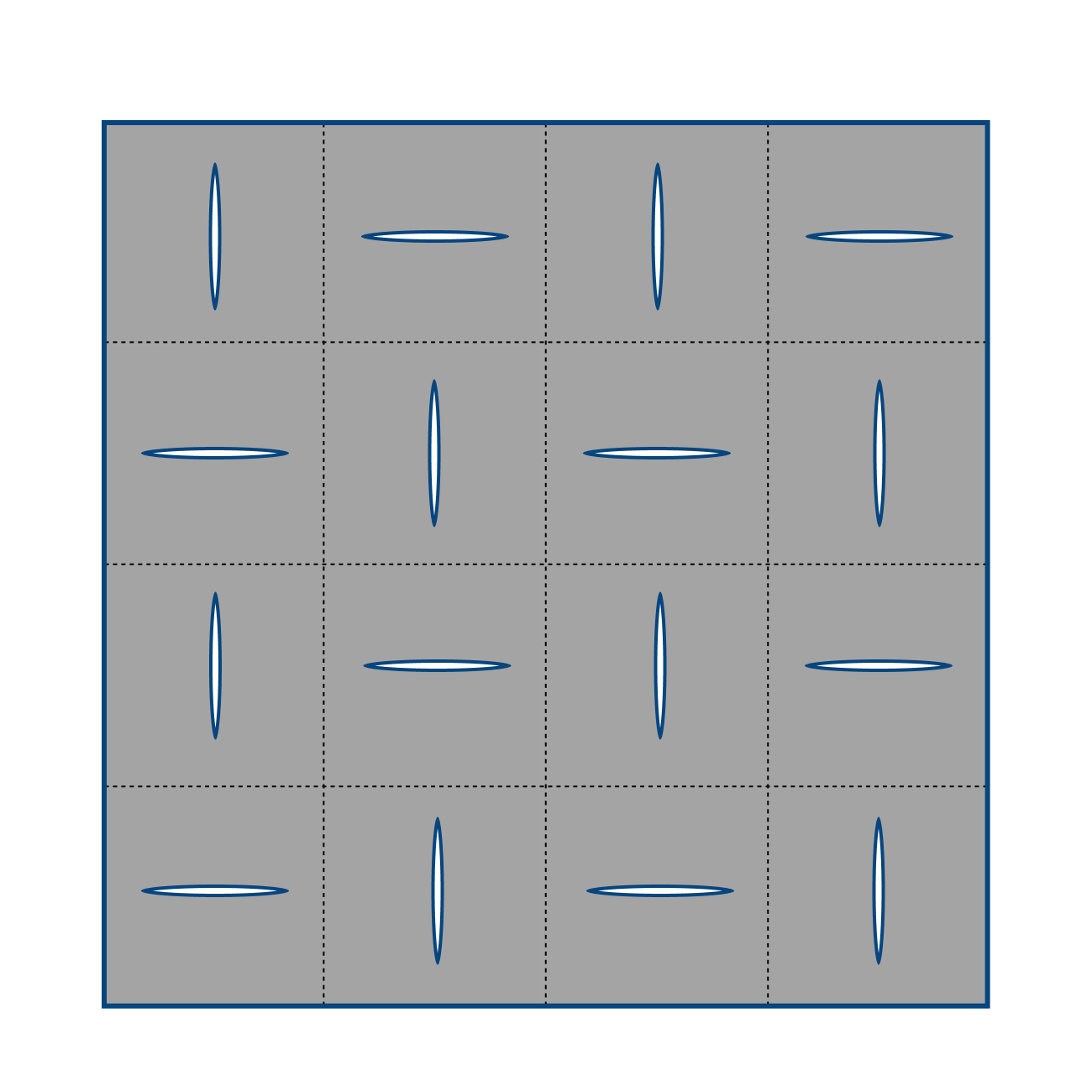} Swap I
	\end{minipage}
	\hfill 
	\begin{minipage}{0.32\linewidth} 
		\centering
		\includegraphics[width=\linewidth]{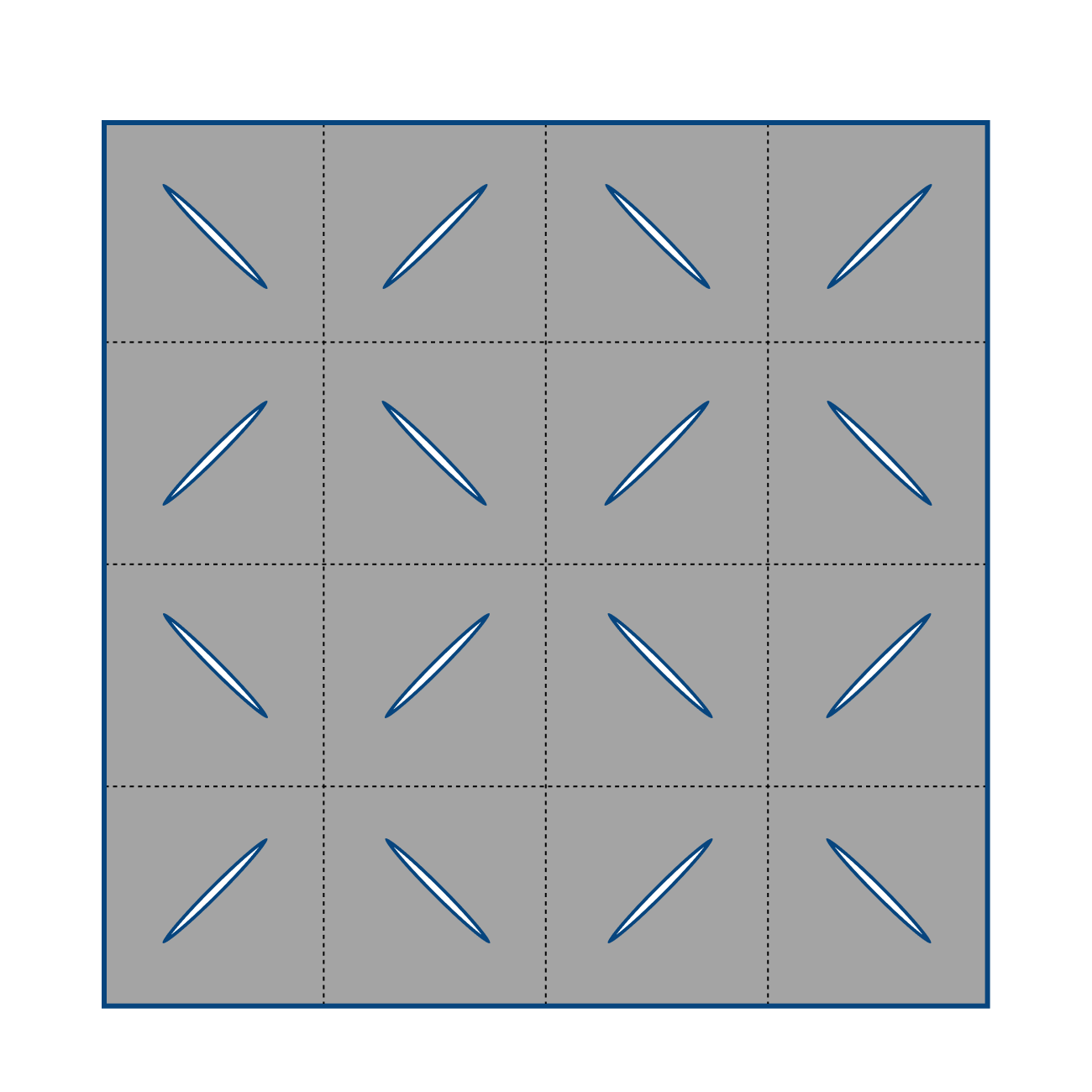} Zigzag I\!I
	\end{minipage}
	\hfill 
	\begin{minipage}{0.32\linewidth} 
		\centering
		\includegraphics[width=\linewidth]{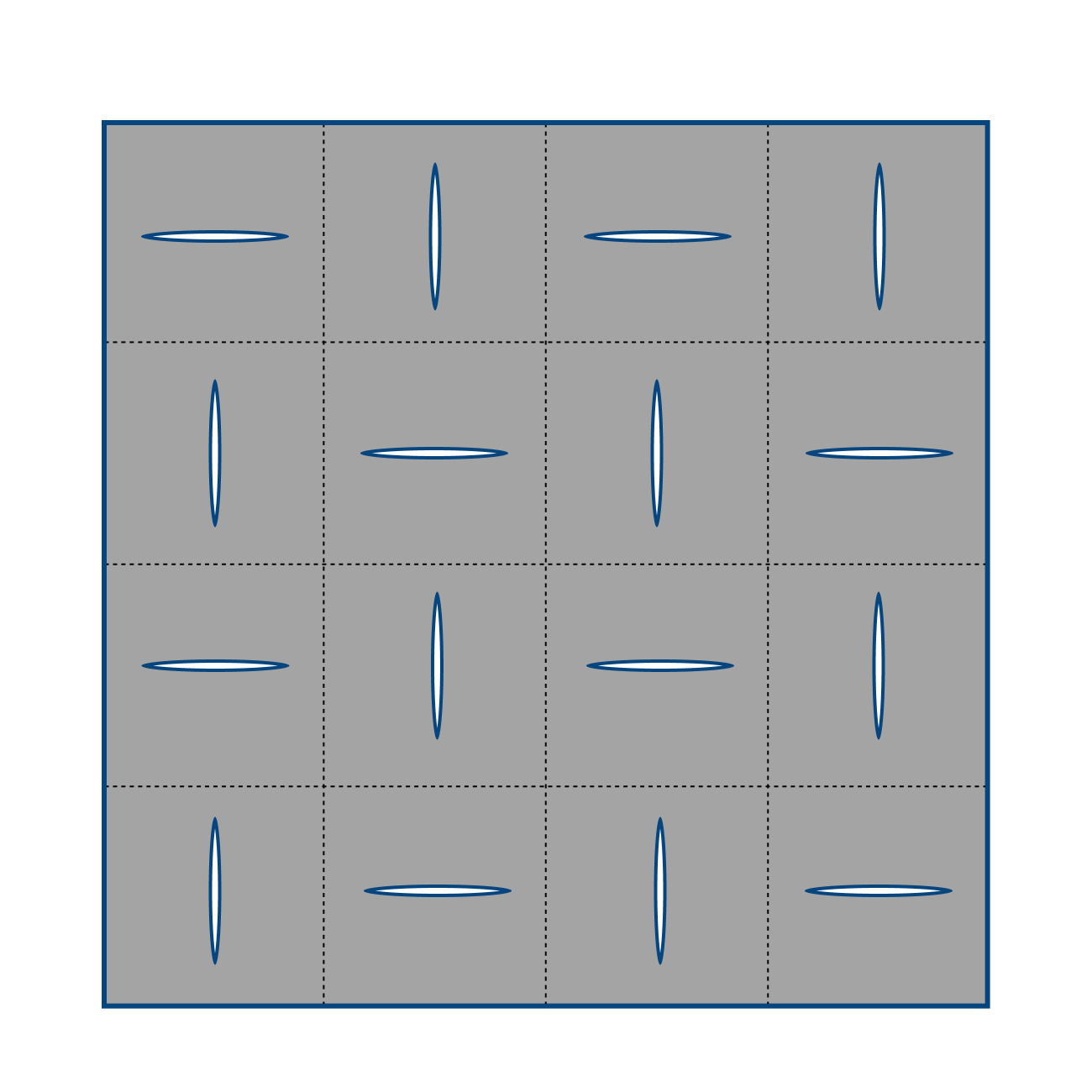} Swap I\!I
	\end{minipage}
	\caption{\label{fig:configsII}Illustration of the optimal designs
	in case of stretching the skin in two axial directions.}
\end{figure}

In the case of stretching in two axial directions, the area of the 
body without taking into account the cuts \(\A(\Omega_{\bua})\) 
is always the same, since we specify the deformation of the entire 
boundary, but the area of the cuts themselves changes. Therefore, 
this problem of maximization of the area of the skin amounts to 
minimizing the area \(\A\big((\oa)_{\bua}\big)\) of the cuts after 
deformation. Therefore, it is optimal to deform elliptical cuts into 
circles, which indeed can be achieved with the swapping pattern. 

Finally, when minimizing the \(L^5\)-norm of the von Mises 
stress \(\M(\Oa)\), we obtained the same zigzag pattern as 
in the stretching in one axial direction. This is reasonable 
since in the present situation it is important to avoid thinning 
of the skin between the cuts where the stress is highest.

The optimal configurations found by the optimization process 
are again summarized in Figure~\ref{fig:configsII} without 
computational noise in the outcomes caused by the 
simulation process.

\section{Conclusion}\label{sct:conclusio}
In this article, we considered the problem of optimizing the
layout of the cut configuration for skin grafting within the framework 
of linear elasticity. The cuts are located in periodic cells and are 
specified by the slope angle. We formulated the optimization problem 
with three objective functionals: the compliance, the \(L^p\)-norm 
of the von Mises stress, and the area of the deformed body. For the
solution of each of these optimization problems, we established 
existence results. By using shape calculus, we derived the 
corresponding shape gradients with respect to the cut 
configuration and implemented a numerical approach based 
on gradient descent. The combination of this approach with
a genetic algorithm avoids getting stuck in one of the numerous 
local minima. We presented reasonable numerical results in the 
case of stretching the skin along one and two axial directions.

The presented approach offers a mathematically solide and 
computationally reasonable tool for the optimization in the 
context of skin grafting. It however also opens several directions 
for future research. An extension of the mathematical model 
to include hyperelastic behaviour would better capture the 
non-linear mechanical properties of the skin. Further, allowing 
more general cut geometries could enhance the practical relevance
of the simulations. Finally, formulating optimization strategies that 
explicitly promote auxetic behaviour may offer promising routes 
for improving the outcomes of skin graft.

\subsection*{Conflict of interest.}
The authors state that there is no conflict of interest.

\subsection*{Replication of results.}
The results presented in this article can be replicated by 
implementing the data structures and algorithms presented 
in this article. The objective functions and their shape gradients 
can be used as described and only require the inputs specified 
in the problem's description.

\nolinenumbers 
\bibliographystyle{plain}
\bibliography{refs}
\end{document}